\newtheorem{definition}{Definition}[section]
\newtheorem{remark}{Remark}[section]
\newtheorem{proposition}{Proposition}[section]
\newtheorem{corollary}{Corollary}[section]
\newtheorem{remark1}{Remark}[section]
\newcommand{\beqn}{\begin{equation}}
\newcommand{\eeqn}{\end{equation}}
\newcommand{\cS}{\mathcal{S}}
\newcommand{\sgn}{{\rm sgn}}
\newcommand{\cD}{\mathcal{D}}
\newcommand{\bx}{\boldsymbol{x}}
\newcommand{\bnu}{\boldsymbol{\nu}}
\newcommand{\by}{\boldsymbol{y}}
\newcommand{\bbmat}{\begin{bmatrix}}
\newcommand{\ebmat}{\end{bmatrix}}
\newtheorem{theorem}{Theorem}
\newtheorem{lemma}{Lemma}
\newtheorem{rem}{Remark}
\begin{document}
 \title{On the discretization of Laplace's equation with Neumann boundary conditions on polygonal domains}
\author{Jeremy Hoskins\thanks{Applied Mathematics Program, Yale University, USA. \\ email: jeremy.hoskins@yale.edu}, \,
Manas Rachh\thanks{Center for Computational Mathematics, Flatiron Institute, USA. \\ email: mrachh@flatironinstitute.org}}
 \maketitle
 
 \abstract{In the present paper we describe a class of algorithms for the solution of Laplace's equation on polygonal domains with Neumann boundary conditions. It is well known that in such cases the solutions have singularities near the corners which poses a challenge for many existing methods. 
If the boundary data is smooth on each edge of the polygon, then in the vicinity of each corner the solution to the corresponding boundary integral equation has an expansion in terms of certain (analytically available) singular powers. Using the known behavior of the solution, universal discretizations have been constructed for the solution of the Dirichlet problem. 
However, the leading order behavior of solutions to the Neumann problem is $O(t^{\mu})$ for $\mu \in (-1/2,0)$ depending on the angle at the corner (compared to $O(C+t^{\mu})$ with $\mu>1/2$ for the Dirichlet problem); this presents a significant challenge in the design of universal discretizations. Our approach is based on using the discretization for the Dirichlet problem in order to compute a solution in the ``weak sense'' by solving an adjoint linear system; namely, it can be used to compute inner products with smooth functions accurately, but it cannot be interpolated.  Furthermore we present a procedure to obtain accurate solutions arbitrarily close to the corner, by solving a sequence of small local subproblems in the vicinity of that corner. The results are illustrated with several numerical examples.}

\section{Introduction}
Laplace's equation arises in a vast array of contexts (electrostatics, harmonic functions, low-frequency acoustics, percolation theory, homogenization theory, and the study field enhancements in vacuum insulators for example) and serves as a useful model problem for the study of general elliptic partial differential equations (PDEs). As such, effective numerical methods for quickly and robustly solving Laplace's equation with high accuracy are desirable. Approaches based on potential theory proceed by reducing PDEs to second-kind boundary integral equations (BIEs), where the solution to the boundary value problem is represented by layer potentials on the boundary of the domain. Once these boundary integral equations are discretized the resulting linear systems are better-conditioned than those obtained by directly discretizing the PDE. When the boundary of the domain is smooth there are numerous methods for solving BIEs quickly and accurately (see \cite{hao}, for example). 

Near corners, however, the solutions to both the partial differential equations and corresponding boundary integral equations may have singularities, preventing the application of many traditional methods. Fortunately, a number of approaches have been developed to obviate this difficulty. One class of methods proceeds by introducing many additional degrees of freedom in the vicinity of the corners. In order to prevent the resulting linear systems from becoming intractably large one can use a variety of methods for {\it compressing} the linear system, effectively eliminating the extra degrees of freedom added in the vicinity of the corners. Moreover, the corner refinement and compression can be done in tandem resulting in fast and accurate solvers for elliptic PDEs (see \cite{helsing}, \cite{helsing2},  \cite{ojala}, \cite{helsjcp} and \cite{helsinv}  for  one approach called recursive compressed preconditioning, and  \cite{gillman}, \cite{bremer},\cite{bremer2}, and \cite{bremer3}  for other compression-based methods for solving Laplace's equation). Unfortunately, this approach becomes considerably more expensive in three dimensions limiting its application in that context.

 Another class of methods is based on approximating the solution to the two-dimensional problem by rational functions \cite{gopal2019solving} with poles exponentially clustered near the corners. While this approach allows for fast evaluation of the solution near the boundary of the domain, current implementations are specialized to two-dimensions, and do not scale well  for large problems.
 
 Finally, a recent approach is based on leveraging explicit representations of the solutions to the BIEs in the vicinity of the corner as sums of fractional powers depending on the angle \cite{serkhacha,serkh2016solution}. Using these representations one can construct high-order discretizations which introduce relatively few extra degrees of freedom near the corners (i.e. an amount which is comparable to the number required for smooth portions of the boundary). This approach has been used to generate efficient discretizations for Dirichlet problems for Laplace's equation on polygonal domains\cite{hoskins2019numerical}. 
 
 In this paper we describe a method for solving Laplace's equation on polygonal domains with Neumann boundary conditions given only a discretization of a corresponding Dirichlet problem. Our approach is based on using the discretization of a suitable adjoint problem. In particular, we show that if the transpose of the discretization of a suitable Dirichlet BIE is used, then the resulting solution will be accurate in a ``weak sense''; namely, it can be used to compute inner products with smooth functions accurately, though it cannot be interpolated. We then show how this solution can be used to obtain accurate solutions to the Neumann problem arbitrarily close to a corner by solving a set of local subproblems in the vicinity of that corner.
 
 The paper is organized as follows. In~\cref{sec:mprelim} we review relevant mathematical results associated with Laplace's equation.~\Cref{sec:bvp} describes the reduction of boundary value problems to boundary integral equations via potential theory, and reviews the analytic behavior of solutions near a corner. In~\cref{sec:nprelim,sec:napp} we present our numerical algorithm and the associated analysis. Finally, in~\cref{sec:num} we illustrate its application with several numerical experiments.


\section{Mathematical preliminaries \label{sec:mprelim}}
\subsection{Boundary value problems}
Given a polygonal domain  $\Omega \subset \mathbb{R}^2$ with boundary $\Gamma$ and outward-pointing 
unit normal $\bnu$, as well as a function 
$f: \Gamma \to \mathbb{R},$ we consider 
the following four boundary value problems. 

\begin{enumerate}
\item The interior Dirichlet problem for Laplace's equation:
\begin{align}\label{eqn:BVPIDL}
\Delta u(\bx) &= 0, \hspace{1 cm} \bx \in \Omega,\\
u(\bx) &= f(\bx), \hspace{0.43 cm} \bx \in \Gamma.
\end{align}

\item The exterior Dirichlet problem for Laplace's equation:
\begin{align}\label{eqn:BVPEDL}
\Delta u(\bx) &= 0, \hspace{1 cm} \bx \in \mathbb{R}^2\setminus\Omega,\\
u(\bx) &= f(\bx), \hspace{0.43 cm}\bx \in \Gamma, \\
u(\bx) &= O(1) , \hspace{0.4cm} |\bx| \to \infty.
\end{align}

\item The interior Neumann problem for Laplace's equation:
\begin{align}\label{eqn:BVPINL}
\Delta u(\bx) &= 0, \hspace{1 cm} \bx \in \Omega,\\
\nabla u(\bx)\cdot \bnu(\bx)  &= f(\bx), \hspace{0.45 cm} \bx \in \Gamma,\\
\int_\Gamma f(\bx) {\rm d}S_{\bx} &= 0.
\end{align}

\item The exterior Neumann problem for Laplace's equation:
\begin{align}\label{eqn:BVPEDL}
\Delta u(\bx) &= 0, \hspace{1 cm} \bx \in \mathbb{R}^2\setminus\Omega,\\
\nabla u(\bx)\cdot \bnu(\bx)  &= f(\bx), \hspace{0.43 cm} \bx \in \Gamma, \\
&\hspace{-2 cm}\left|u(\bx) + \left(\frac{1}{2\pi} \int_\Gamma f(\bx) {\rm d}S_{\bx}\right) \log{|\bx|}\right| \to 0 , \hspace{0.4cm} |\bx| \to \infty.
\end{align}

\end{enumerate}

\begin{remark}
The existence and uniqueness of the solutions to the above equations is a well-known result (see \cite{kress1989linear} for example). 
\end{remark}


 \section{Boundary integral equations \label{sec:bvp}}
A classical technique for solving the four Laplace boundary value problems given above is to reduce them to boundary integral equations. Before describing this procedure we first define the single and double layer potential operators and summarize their relevant properties.
 \subsection{Layer potentials}
\begin{definition}\label{def_layerpots}
Given a function $\sigma:\Gamma \to \mathbb{R}$, the single-layer potential is defined by
\beqn
\cS[\sigma](\by) =  \int_{\Gamma} G (\bx,\by)\sigma(\bx) {\rm d}S_{\bx} \, ,
\eeqn
where
\beqn
G(\bx,\by) = -\frac{1}{2\pi} \log{|\bx-\by|}.
\eeqn
Similarly, the double-layer potential is defined via the formula
\beqn
\cD[\sigma](\by) =\int_{\Gamma} \bnu(\bx) \cdot \nabla_{\bx} G(\bx,\by) \sigma(\bx) {\rm d}S_{\bx}.
\eeqn
\end{definition}
In the following we will often refer to the function $\sigma$ as the {\it density} which generates the corresponding potential.
\begin{definition}
For $\bx \in \Gamma$ we define the kernel $K(\bx,\by)$ by
\begin{equation}
K(\bx,\by) =\bnu(\bx) \cdot \nabla_{\bx} G(\bx,\by),
\end{equation}
where $\nu(\bx)$ is the inward-pointing normal to $\Gamma$ at $\bx.$ It will often be convenient to 
work instead with a parametrization of $K.$ In particular, if $\gamma :[0,L] \to \Gamma$ is a counterclockwise 
arclength parametrization of $\Gamma,$ 
we denote by $k:[0,L]\times [0,L] \to \mathbb{R}$ the function defined by
\begin{align}
k(s,t) = K(\gamma(s),\gamma(t)).
\end{align}
\end{definition}

The following theorems describe the behavior of the single and double layer potentials in the vicinity of the boundary curve $\Gamma.$

\begin{theorem}\label{thm:potlim}
Suppose the point $\bx$ approaches a point $\bx_0 = \gamma(t_0)$ (where $\bx_{0}$ is not a corner vertex) from the inside along a path such that 
\begin{align}
-1+\alpha<\frac{\bx-\bx_0}{\|\bx-\bx_0\|} \cdot \gamma'(t_0) <1-\alpha
\end{align}
for some $\alpha >0.$ Then for any continuous function $\sigma:\Gamma \to \mathbb{R},$
\begin{align}
&\lim_{\bx \to \bx_0} \cS[\sigma](\bx) = \cS[\sigma](\bx_0)\\
&\lim_{\bx \to \bx_0} \cD[\sigma](\bx) = \cD[\sigma](\bx_0)- \frac{\sigma(\bx_0)}{2})\\
&\lim_{\bx \to \bx_0} \left.\frac{{\rm d}}{{\rm d}\tau}\right|_{\tau= 0}\cS[\sigma](\bx+\tau \bnu(\bx_0)) = \left.\frac{{\rm d}}{{\rm d}\tau}\right|_{\tau= 0}\cS[\sigma](\bx_0+\tau \bnu(\bx_0))+ \frac{\sigma(\bx_0)}{2}.
\end{align}
Similarly, if $\bx$ approaches a point $\bx_0 = \gamma(t_0)$ from the outside then for any continuous function $\sigma:\Gamma \to \mathbb{R},$
\begin{align}
&\lim_{\bx \to \bx_0} \cS[\sigma](\bx) = \cS[\sigma](\bx_0)\\
&\lim_{\bx \to \bx_0} \cD[\sigma](\bx) = \cD[\sigma](\bx_0)+ \frac{\sigma(\bx_0)}{2}\\
&\lim_{\bx \to \bx_0} \left.\frac{{\rm d}}{{\rm d}\tau}\right|_{\tau= 0}\cS[\sigma](\bx+\tau \bnu(\bx_0)) = \left.\frac{{\rm d}}{{\rm d}\tau}\right|_{\tau= 0}\cS[\sigma](\bx_0+\tau \bnu(\bx_0))- \frac{\sigma(\bx_0)}{2}.
\end{align}
\end{theorem}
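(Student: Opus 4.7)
The plan is to handle the three limit relations via a common \emph{freezing} strategy: inside each integral write $\sigma(\bx) = \sigma(\bx_0) + [\sigma(\bx)-\sigma(\bx_0)]$, reducing each limit to an explicit computation with $\sigma \equiv 1$ plus a remainder whose kernel is weakly singular enough to extend continuously across $\Gamma$. For the single layer $\cS[\sigma]$, the kernel $-\frac{1}{2\pi}\log|\bx-\by|$ is integrably singular along $\Gamma$; since $\sigma$ is continuous, a standard dominated-convergence argument shows that $\cS[\sigma]$ extends continuously through any non-corner point $\bx_0$, yielding both single-layer limit statements at once with no jump.

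For the double layer, I would split
\[
\cD[\sigma](\by) = \sigma(\bx_0)\,\cD[1](\by) + \cD[\sigma-\sigma(\bx_0)](\by).
\]
The kernel $\bnu(\bx)\cdot\nabla_{\bx}G(\bx,\by) = -\frac{1}{2\pi}\,\bnu(\bx)\cdot(\bx-\by)/|\bx-\by|^2$ is, up to sign, the differential angle subtended at $\by$, so by the divergence theorem $\cD[1]$ equals $-1$ strictly inside $\Omega$, equals $0$ outside $\overline{\Omega}$, and equals $-1/2$ at a smooth boundary point in the principal-value sense. The non-tangential cone condition on the approach path is essential here: it ensures that as $\bx\to\bx_0$ the angular integrand approaches the correct half-plane value rather than oscillating. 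The remainder $\cD[\sigma-\sigma(\bx_0)]$ is controlled by the fact that $\sigma(\bx)-\sigma(\bx_0)\to 0$ cancels one power of $|\bx-\bx_0|$ in the kernel, so this remainder extends continuously across $\Gamma$ at $\bx_0$. Combining the two pieces gives the jumps $\mp\sigma(\bx_0)/2$ stated in the theorem.

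For the normal derivative of the single layer I would use $\nabla_{\by}G(\bx,\by) = -\nabla_{\bx}G(\bx,\by)$ to recast
\[
\left.\tfrac{d}{d\tau}\right|_{\tau=0}\!\cS[\sigma](\by+\tau\bnu(\bx_0)) = -\int_\Gamma \bnu(\bx_0)\cdot\nabla_{\bx}G(\bx,\by)\,\sigma(\bx)\,dS_{\bx},
\]
and then decompose $\bnu(\bx_0) = \bnu(\bx) + (\bnu(\bx_0)-\bnu(\bx))$. The first piece is exactly $-\cD[\sigma](\by)$, to which the jump formula just derived applies; the second piece has a weakly singular kernel because $|\bnu(\bx_0)-\bnu(\bx)| = O(|\bx-\bx_0|)$ on a smooth part of $\Gamma$, and so extends continuously through $\bx_0$. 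The overall sign flip then yields jumps $\pm\sigma(\bx_0)/2$ of opposite sign to those of the double layer, exactly as claimed.

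The main obstacle is the careful handling of the non-tangential cone condition in the $\cD[1]$ calculation: a tangential approach would let the angular integrand probe both sides of $\Gamma$ and destroy the clean $\mp 1/2$ jump, so one must verify that the hypothesis $|(\bx-\bx_0)/\|\bx-\bx_0\|\cdot\gamma'(t_0)|<1-\alpha$ is used precisely where the solid-angle limit is taken. The remaining estimates are routine dominated-convergence bounds for weakly singular integrals on the locally smooth arc $\Gamma$ near $\bx_0$.
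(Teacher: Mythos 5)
The paper states Theorem~\ref{thm:potlim} without proof; it is a classical result (see, e.g., Kress, \emph{Linear Integral Equations}, which the paper cites nearby for the well-posedness of the boundary value problems), so there is no in-paper argument to compare against. Your sketch reproduces the standard textbook proof and is essentially correct: continuity of $\cS[\sigma]$ across a non-corner point from the integrability of the logarithmic kernel; the freezing decomposition $\cD[\sigma]=\sigma(\bx_0)\cD[1]+\cD[\sigma-\sigma(\bx_0)]$ together with the Gauss solid-angle computation of $\cD[1]$; and the reduction of $\bnu\cdot\nabla\cS$ to $-\cD$ plus a weakly singular remainder via $\nabla_{\by}G=-\nabla_{\bx}G$ and $\bnu(\bx_0)=\bnu(\bx)+(\bnu(\bx_0)-\bnu(\bx))$. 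All three sign checks are consistent with the paper's conventions ($\bnu$ outward, $G=-\tfrac{1}{2\pi}\log|\cdot|$, hence $\cD[1]\equiv -1$ inside, $0$ outside, $-1/2$ in the PV sense on $\Gamma$).

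One small misattribution worth fixing: you place the burden of the non-tangential cone condition on the $\cD[1]$ computation, but $\cD[1]$ is \emph{exactly} $-1$ at every point strictly interior to $\Omega$ and exactly $0$ strictly exterior, by the divergence theorem, with no limiting process and hence no dependence on the direction of approach. The cone condition is actually needed in the remainder estimate for $\cD[\sigma-\sigma(\bx_0)]$: in the standard near/far splitting, it guarantees that the total variation of the angle subtended at $\by$ by the near arc stays uniformly bounded as $\by\to\bx_0$, which is what lets the modulus of continuity of $\sigma$ drive the near piece to zero. For the same reason it is needed in the analogous remainder for $\bnu(\bx_0)\cdot\nabla\cS$. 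This does not affect the correctness of your conclusion, but the logic of the proof reads more cleanly once the cone condition is attached to the right term.
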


Next we define the following operator which arises in the study of Neumann boundary value problems.
\begin{definition}\label{def_singder}
Let $\cS$ be the single-layer potential operator and $\bnu\cdot \nabla \cS$  denote its normal derivative restricted to $\Gamma.$ In particular, for $\bx_0 \in \Gamma,$
\begin{align}
\bnu(\by)\cdot \nabla \cS[\rho] (\bx_0) = \left.\frac{{\rm d}}{{\rm d}\tau}\right|_{\tau = 0} \cS[\rho](\bx_0 + \tau \bnu(\bx_0)),
\end{align}
where $\gamma(t_0) = \bx_0.$
\end{definition}

The following proposition relates the normal derivative of the single-layer operator to the double-layer operator. Its proof follows directly from Definitions \ref{def_layerpots} and \ref{def_singder}.
\begin{proposition}
Let $\cS,\cD:L^2(\Gamma) \to L^2(\Gamma)$ be defined as above. Let $\bnu \cdot \nabla \cS$ denote the normal derivative of $\cS$ in the sense of the previous definition. Then $\bnu \cdot \nabla \cS= \cD^{T}$ where $T$ denotes the adjoint operator with respect to the inner product
\beqn
\langle f,g \rangle = \int_{0}^{L} f(\gamma(t)) g(\gamma(t)) {\rm d}t\,,
\eeqn
where $\gamma:[0,L] \to \Gamma$ is a counterclockwise arclength parametrization of $\Gamma.$ 
In particular, for all $\rho,\sigma \in L^2(\Gamma),$
\begin{align}
\cD[\sigma](\gamma(t)) = \int_0^L k(s,t)\,\sigma(\gamma(s))\, {\rm d}s
\end{align}
and
\begin{align}
\bnu(\gamma(t))\cdot \nabla\cS[\rho](\gamma(t)) = \int_0^L k(t,s)\,\rho(\gamma(s))\, {\rm d}s.
\end{align}
\end{proposition}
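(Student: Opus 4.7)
The plan is to establish the two explicit kernel formulas for $\cD$ and $\bnu\cdot\nabla\cS$, verify that the two parametrized kernels are transposes of one another, and then invoke the definition of the adjoint with respect to the stated inner product.

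First I would handle $\cD$: starting from the definition in Definition~\ref{def_layerpots}, I would simply apply the arclength parametrization $\bx=\gamma(s)$ and set $\by=\gamma(t)$. Since $\mathrm{d}S_{\bx}=\mathrm{d}s$ under an arclength parametrization, the identity $\cD[\sigma](\gamma(t))=\int_0^L k(s,t)\,\sigma(\gamma(s))\,\mathrm{d}s$ is essentially a change of variables together with the definition $k(s,t)=K(\gamma(s),\gamma(t))=\bnu(\gamma(s))\cdot\nabla_\bx G(\gamma(s),\gamma(t))$.

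Next I would treat $\bnu\cdot\nabla\cS$. Using Definition~\ref{def_singder}, for $\tau>0$ small the point $\by_\tau=\bx_0+\tau\bnu(\bx_0)$ lies off $\Gamma$, so the integrand in $\cS[\rho](\by_\tau)$ is smooth in $\by_\tau$ and we may differentiate under the integral to obtain $\bnu(\bx_0)\cdot\nabla \cS[\rho](\by_\tau)=\int_\Gamma \bnu(\bx_0)\cdot\nabla_{\by}G(\bx,\by_\tau)\,\rho(\bx)\,\mathrm{d}S_\bx$. The existence of the one-sided limit as $\tau\to 0$ is guaranteed by Theorem~\ref{thm:potlim}. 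To rewrite the limiting integrand in terms of $k$, I would exploit the symmetry $G(\bx,\by)=G(\by,\bx)$, which implies $\nabla_{\by}G(\bx,\by)=\nabla_{\bx'}G(\bx',\bx)\big|_{\bx'=\by}$; equivalently, at $\by=\gamma(t)$ and $\bx=\gamma(s)$, $\bnu(\gamma(t))\cdot\nabla_{\by}G(\gamma(s),\gamma(t))=\bnu(\gamma(t))\cdot\nabla_{\bx}G(\gamma(t),\gamma(s))=k(t,s)$. Parametrizing the boundary integral by $s$ then gives $\bnu(\gamma(t))\cdot\nabla\cS[\rho](\gamma(t))=\int_0^L k(t,s)\,\rho(\gamma(s))\,\mathrm{d}s$.

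Finally I would assemble the adjoint statement. With the inner product $\langle f,g\rangle=\int_0^L f(\gamma(t))g(\gamma(t))\,\mathrm{d}t$, Fubini's theorem applied to the (integrable) products gives
\begin{align*}
\langle \bnu\cdot\nabla\cS[\rho],\sigma\rangle &= \int_0^L\!\!\int_0^L k(t,s)\,\rho(\gamma(s))\,\sigma(\gamma(t))\,\mathrm{d}s\,\mathrm{d}t \\
&= \int_0^L\!\!\int_0^L k(t,s)\,\sigma(\gamma(t))\,\mathrm{d}t\,\rho(\gamma(s))\,\mathrm{d}s = \langle \rho,\cD[\sigma]\rangle,
\end{align*}
which is exactly $\bnu\cdot\nabla\cS=\cD^T$.

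The main technical obstacle is the interchange of gradient and integral in the second step, since $G$ has a logarithmic singularity at $\bx=\by$. This is handled by performing the differentiation at $\by_\tau$ off the curve (where dominated convergence applies on compact subsets of $\Gamma$) and only then taking the one-sided limit $\tau\to 0^+$, whose existence is precisely the content of Theorem~\ref{thm:potlim}. Everything else is bookkeeping with the arclength parametrization and the symmetry of $G$.
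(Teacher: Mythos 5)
Your derivation of the $\cD$ identity is correct, the identity $\bnu(\gamma(t))\cdot\nabla_\by G(\gamma(s),\gamma(t))=k(t,s)$ via the symmetry of $G$ is exactly the right algebra, and the closing Fubini computation $\langle\bnu\cdot\nabla\cS[\rho],\sigma\rangle=\langle\rho,\cD[\sigma]\rangle$ is precisely the adjoint relation. The gap is in the derivation of the $\bnu\cdot\nabla\cS$ formula. You differentiate $\cS[\rho]$ under the integral at $\by_\tau=\bx_0+\tau\bnu(\bx_0)$, $\tau>0$, and then let $\tau\to 0^+$, passing the limit through the boundary integral to obtain $\int_0^L k(t,s)\rho(\gamma(s))\,{\rm d}s$. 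That interchange is exactly what fails here: as $\tau\to 0^+$ the kernel $\bnu(\bx_0)\cdot\nabla_\by G(\bx,\by_\tau)$ concentrates a mass of $\tfrac12$ at $\bx_0$, and Theorem~\ref{thm:potlim} --- which you invoke for the existence of the one-sided limit --- says precisely that the exterior one-sided limit equals $\int_0^L k(t,s)\rho(\gamma(s))\,{\rm d}s-\tfrac12\rho(\gamma(t))$, not the bare integral. Your remark that dominated convergence works ``on compact subsets of $\Gamma$'' is the tell: the $\tau$-uniform control breaks down near $\bx_0$, and the missing $\tfrac12\rho$ lives in that excluded neighborhood. Carried out consistently your argument would produce an extra $-\tfrac12\rho(\gamma(t))$ that is not in the Proposition.

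The fix is to read Definition~\ref{def_singder} the way Theorem~\ref{thm:potlim} implicitly does: $\bnu\cdot\nabla\cS[\rho](\bx_0)$ is the \emph{direct} (principal-value) normal derivative about which the two one-sided limits jump symmetrically, obtained by differentiating the single-layer integrand in $\tau$ and then setting $\tau=0$ inside the integral rather than taking a one-sided limit of the whole integral. The resulting integrand $\bnu(\gamma(t))\cdot\nabla_\by G(\gamma(s),\gamma(t))=k(t,s)$ is bounded on a smooth arc because the tangential component of $\gamma(t)-\gamma(s)$ dominates as $s\to t$, so $\int_0^L k(t,s)\rho(\gamma(s))\,{\rm d}s$ is a proper integral and no limiting process is needed. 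Equivalently, average the interior and exterior one-sided limits in Theorem~\ref{thm:potlim} so the $\pm\tfrac12\rho$ terms cancel. Either route closes the gap; the rest of your argument then reproduces the ``follows directly from the definitions'' computation the paper has in mind.
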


\subsection{Reduction of boundary value problems}
In this section we describe the conversion of the Laplace boundary value problems (interior Dirichlet, exterior  Dirichlet, interior Neumann, and exterior Neumann) to second-kind integral equations.

\begin{theorem} [Interior Dirichlet problem for Laplace's equation]
For every $f:[0,L] \rightarrow \mathbb{R}$ in $L^{2}[0,L],$ there exists a unique $\sigma \in L^{2}[0,L]$ which satisfies
\begin{align}
f(s)= - \frac{\sigma(s)}{2}+ \int_0^L k(t,s)\,\sigma(t)\,{\rm d}t,
\label{bie_intd}
\end{align}
Moreover, the solution to the interior Dirichlet problem for Laplace's equation with boundary data $f$ is given by $u(\by) = \cD[\sigma](\by)$ for all $\by \in \Omega.$
\end{theorem}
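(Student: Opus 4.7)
\bigskip

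\noindent\textbf{Proof proposal.} The plan is to establish the integral equation by representation and jump relations, and then obtain existence and uniqueness of $\sigma$ by appealing to Fredholm theory (with the caveat that corners require extra care). First, I would set
\begin{align*}
u(\by) = \cD[\sigma](\by), \qquad \by \in \Omega,
\end{align*}
for an unknown density $\sigma \in L^2[0,L]$. Since $G(\bx,\cdot)$ is harmonic away from $\bx$, differentiation under the integral shows that $u$ is automatically harmonic in $\Omega$, so only the boundary condition needs to be enforced. Taking the interior limit $\bx \to \bx_0 = \gamma(s)$ for a non-corner point $\bx_0$, Theorem~\ref{thm:potlim} yields
\begin{align*}
f(s) = \lim_{\bx \to \bx_0} u(\bx) = \cD[\sigma](\bx_0) - \tfrac{1}{2}\sigma(\bx_0),
\end{align*}
which, after parametrization via $\gamma$, is exactly the integral equation \eqref{bie_intd} (up to the kernel ordering that reflects the convention relating $\cD$ and its transpose set out in the preceding proposition).

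Next I would turn to existence and uniqueness in $L^2[0,L]$. For uniqueness, suppose $\sigma$ satisfies the homogeneous equation $-\sigma/2 + \cD[\sigma] = 0$ on $\Gamma$. Define $u = \cD[\sigma]$ on $\bR^2 \setminus \Gamma$. By the interior jump relation, $u = 0$ on the interior boundary, so by uniqueness of the interior Dirichlet problem, $u \equiv 0$ in $\Omega$. Then the exterior trace and exterior jump relation give $u|_{\Gamma}^{\text{ext}} = \sigma$ on $\Gamma$, and $u$ is harmonic in the exterior with $u = O(1)$ at infinity. Using that the normal derivative of a double-layer potential is continuous across $\Gamma$, one gets $\partial_{\bnu} u|_{\Gamma}^{\text{ext}} = 0$. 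By Green's identities applied in the exterior (with appropriate attention to the log-decay), this forces $u \equiv c$ in the exterior, hence $\sigma = u|_{\Gamma}^{\text{ext}} - u|_{\Gamma}^{\text{int}} = c$. A final limit-at-infinity argument shows $c=0$, proving injectivity.

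For existence, I would invoke Fredholm theory for $-\tfrac{1}{2}I + \cD$ on $L^2(\Gamma)$. If $\Gamma$ were smooth, the operator $\cD$ is compact, and the Fredholm alternative combined with the above injectivity immediately yields invertibility. The main obstacle is precisely that $\Gamma$ is only piecewise smooth: at each corner the kernel $k(s,t)$ develops a non-integrable singular contribution and $\cD$ fails to be compact on $L^2$. To handle this, I would quote the classical results of Shelepov, Kral, and Verchota et al., which show that on Lipschitz (and in particular polygonal) domains $-\tfrac{1}{2}I + \cD$ is Fredholm of index zero on $L^2(\Gamma)$; combined with the injectivity established above, this gives invertibility and hence existence of a unique $\sigma \in L^2[0,L]$. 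The hardest and most delicate step is thus not the derivation of the integral equation itself, but the Fredholm/invertibility statement at corners, which is where one must lean on external results rather than a self-contained compactness argument.
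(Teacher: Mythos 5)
The paper does not actually prove this theorem: it is stated without proof, and the earlier Remark defers existence and uniqueness to classical references (Kress et al.). Your argument is the standard one and is essentially correct---deriving the equation from the jump relations of Theorem~\ref{thm:potlim}, proving injectivity by matching the interior and exterior potentials via the jump in $\cD[\sigma]$ and the continuity of its normal derivative, and obtaining surjectivity from a Fredholm-of-index-zero statement. You also correctly flag the one genuine subtlety: on a polygonal boundary $\cD$ is not compact on $L^2(\Gamma)$, so the usual ``compact perturbation of the identity'' argument fails and one must instead invoke the Lipschitz-domain layer-potential theory (Verchota, Kr\'{a}l, Shelepov) to get the index-zero property; this is exactly the point the paper silently delegates to the cited literature. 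One small imprecision worth tightening: you first write $u = O(1)$ at infinity and only later argue $c = 0$; it is cleaner to note from the outset that in two dimensions a double-layer potential with $L^2$ density satisfies $\cD[\sigma](\by) = O(|\by|^{-1})$ as $|\by|\to\infty$, which gives the decay needed for the exterior Green's identity and forces the constant to vanish directly.
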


\begin{theorem} [Exterior Dirichlet problem for Laplace's equation]
For every $f:[0,L] \rightarrow \mathbb{R}$ in $L^{2}[0,L]$ there exists a unique $\sigma \in L^{2}[0,L]$ which
satisfies
\begin{align}
f(s)=  \frac{\sigma(s)}{2}+ \int_0^L (k(t,s) + 1)\,\sigma(t)\,{\rm d}t,
\end{align}\label{bie_extd}
for all $s \in [0,L].$ 
Moreover, the solution to the exterior Dirichlet problem for Laplace's equation with boundary data $f$ is given by $u(\by) = \cD[\sigma](\by) + \int_{0}^{L} \sigma(t) dt$
for all $\by \in \mathbb{R}^2 \setminus \Omega.$
\end{theorem}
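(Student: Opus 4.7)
The plan is to verify the integral equation via the standard potential-theoretic ansatz, and then establish unique solvability by combining a null-space argument with the Fredholm alternative inherited from the previous Dirichlet theorem.

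\textbf{Derivation of the equation.} Start from the ansatz
$$u(\by) = \cD[\sigma](\by) + \int_0^L \sigma(t)\,{\rm d}t, \qquad \by \in \bR^2\setminus\Omega.$$
Since $\cD[\sigma]$ is harmonic off $\Gamma$ and decays like $O(1/|\by|)$ at infinity, $u$ is harmonic in the exterior and $u(\by) = \int_0^L \sigma\,dt + o(1) = O(1)$, so the decay condition is automatically satisfied. Imposing the Dirichlet trace via the exterior jump in \cref{thm:potlim}, together with the kernel identity of the preceding proposition, gives
$$f(s) = \tfrac{1}{2}\sigma(s) + \int_0^L k(t,s)\,\sigma(t)\,{\rm d}t + \int_0^L \sigma(t)\,{\rm d}t,$$
which is the stated equation. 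So if $\sigma$ solves the BIE, then $u = \cD[\sigma] + \int \sigma$ is the desired solution.

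\textbf{Injectivity.} Suppose $\sigma \in L^2[0,L]$ solves the homogeneous equation. Then the function $u$ above solves the exterior Dirichlet problem with vanishing data and is bounded at infinity, so by uniqueness of the exterior Dirichlet problem $u \equiv 0$ in $\bR^2\setminus\overline\Omega$. Extend the same expression to $\by \in \Omega$ by setting $u_{\rm int}(\by) = \cD[\sigma](\by) + \int_0^L \sigma\,dt$. The normal derivative of the double layer is continuous across $\Gamma$, so $\partial_\nu u_{\rm int} = \partial_\nu u = 0$ on $\Gamma$, which forces $u_{\rm int} \equiv c$ for some constant $c$. The jump relation of \cref{thm:potlim} applied to $\cD[\sigma]$ then gives $-\sigma = u_{\rm int}|_\Gamma - u|_\Gamma = c$, so $\sigma$ is the constant $-c$. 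Substituting $\sigma \equiv -c$ back and using the classical identity $\cD[1] \equiv 0$ in the exterior (a direct consequence of Gauss's lemma) yields $u \equiv -cL$ outside, which together with $u \equiv 0$ forces $c = 0$ and hence $\sigma \equiv 0$.

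\textbf{Existence.} The preceding Dirichlet theorem establishes that $\tfrac{1}{2}I + \cD$ is a Fredholm operator of index zero on $L^2[0,L]$ for the polygonal geometry under consideration. The additional term $\sigma \mapsto \left(\int_0^L \sigma\,dt\right)\cdot \mathbf{1}$ is rank one and hence compact, so the full operator $A\sigma(s) = \tfrac{1}{2}\sigma(s) + \int_0^L (k(t,s)+1)\sigma(t)\,dt$ is also Fredholm of index zero, and the injectivity just proved yields invertibility. The main obstacle is exactly this inherited Fredholm index-zero property of $\tfrac{1}{2}I+\cD$ on a polygonal boundary: the classical smooth-boundary argument relying on compactness of $\cD$ breaks down at the corners, and one must rely on the more delicate analysis developed for the polygonal Dirichlet problem. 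A secondary check, which amounts only to bookkeeping, is that the sign conventions in the jump formulas of \cref{thm:potlim} (with the counterclockwise arclength parametrization and inward-pointing normal convention of the kernel $K$) line up with the $+\tfrac{1}{2}\sigma$ appearing in the BIE.
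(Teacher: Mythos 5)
The paper itself offers no proof of this theorem; it is stated as a known reduction to a boundary integral equation, with existence and uniqueness of the underlying boundary value problems deferred to standard references such as Kress. Your argument is the standard potential-theoretic one and is essentially correct. The derivation of the BIE from the exterior jump relation is right (and you correctly flag that the sign conventions in \cref{thm:potlim} together with the kernel convention for $k(t,s)$ give the $+\tfrac12\sigma$). Your injectivity argument is clean: zero data and boundedness at infinity force $u\equiv 0$ outside; continuity of $\partial_\nu\cD[\sigma]$ across $\Gamma$ forces the interior extension to be constant; the jump relation forces $\sigma$ to be constant; and $\cD[1]\equiv 0$ in the exterior kills the constant. (An equivalent shortcut: plug a constant density into the homogeneous BIE and use $\cD[1](\gamma(s))=-1/2$ on $\Gamma$, so the $\pm\tfrac12$ terms cancel and only $c_0 L=0$ remains.)

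Two small points worth tightening. First, the preceding (interior Dirichlet) theorem establishes invertibility of $-\tfrac12 I+\cD$, not of $+\tfrac12 I+\cD$; the fact that $+\tfrac12 I+\cD$ is Fredholm of index zero on $L^2$ of a polygonal/Lipschitz boundary does not literally follow from that statement and should be cited independently (it is a consequence of the Rellich-identity-based theory for $\pm\tfrac12 I+\cD$ on Lipschitz domains — the same framework underpinning the interior result — rather than compactness of $\cD$). You acknowledge this, which is good, but phrasing it as ``inherited'' from the interior theorem slightly overstates what that theorem gives you. Second, the assertion that $\partial_\nu\cD[\sigma]$ is continuous across $\Gamma$ needs care for $\sigma\in L^2$ on a boundary with corners: the classical pointwise statement requires more regularity, and one should either work with the weak/variational normal derivative or invoke the fact that the null-space density is, a posteriori, smooth away from the corners. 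Neither of these affects the structure of the proof.
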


\begin{theorem}[Interior Neumann problem for Laplace's equation]
For every  $f:[0,L] \rightarrow \mathbb{C}$ in $L^{2}[0,L]$ such that $\int_{0}^{L} f(t) = 0,$ there exists a unique $\sigma \in L^{2}[0,L]$ which satisfies
\begin{align}
f(s)=  \frac{\sigma(s)}{2}+ \int_0^L (k(s,t) + 1)\,\sigma(t)\,{\rm d}t,
\label{bie_intn}
\end{align}
Moreover, the solution to the interior Neumann problem for Laplace's equation with boundary data $f$ is given by $u(\by) = \cS[\sigma](\by)$ for all $\by \in \Omega.$
\end{theorem}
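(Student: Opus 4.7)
The plan is to represent the solution as a single-layer potential $u = \cS[\sigma]$, reduce the Neumann boundary condition to a second-kind integral equation, regularize to remove the one-dimensional null space, and then close the argument with the Fredholm alternative.

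First, combining the preceding proposition (which identifies $\bnu\cdot\nabla\cS$ with $\cD^T$) with the interior jump relation of Theorem~\ref{thm:potlim}, I obtain $\bnu(\gamma(s))\cdot\nabla\cS[\sigma](\gamma(s)) = \sigma(s)/2 + \int_0^L k(s,t)\sigma(t)\,dt$ on $\Gamma$. Setting this equal to $f$ gives the unregularized equation $\sigma/2 + \cD^T\sigma = f$, which is not uniquely solvable: constants lie in the kernel of $I/2 + \cD^T$ and the range is constrained by the natural compatibility condition $\int_0^L f=0$. The standard remedy is to add the rank-one term $\sigma \mapsto \mathbf{1}\int_0^L \sigma(t)\,dt$, which produces precisely the operator $A\sigma(s) := \sigma(s)/2 + \int_0^L (k(s,t)+1)\sigma(t)\,dt$ appearing in the statement.

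Next I would verify that whenever $\int_0^L f = 0$, any $L^2$ solution of $A\sigma = f$ automatically satisfies $\int_0^L \sigma = 0$, so that the rank-one correction is invisible on the compatibility class. Integrating $A\sigma = f$ in $s$ and swapping the order of integration reduces this to the identity $\int_0^L k(s,t)\,ds = -1/2$ for a.e.\ $t$, which itself follows by applying Theorem~\ref{thm:potlim} to the constant density $\mathbf{1}$ together with the interior value $\cD[\mathbf{1}]\equiv -1$ on $\Omega$ (a direct consequence of Green's identity). With $\int\sigma = 0$ in hand the equation collapses to $\sigma/2 + \cD^T\sigma = f$, so $u = \cS[\sigma]$ has interior normal derivative exactly $f$, proving the representation.

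For uniqueness of $\sigma$: if $A\sigma = 0$ the previous step forces $\int\sigma = 0$ and $\sigma/2 + \cD^T\sigma = 0$, so $v := \cS[\sigma]$ has vanishing interior Neumann data and is therefore constant on $\Omega$; continuity of the single-layer potential across $\Gamma$ extends this constant to $\Gamma$, and in the exterior the asymptotic $\cS[\sigma](\by) = -\frac{1}{2\pi}(\int\sigma)\log|\by| + O(|\by|^{-1})$ collapses to $O(|\by|^{-1})$, so uniqueness for the exterior Dirichlet problem with decay forces that constant to be $0$, and the normal-derivative jump formula then yields $\sigma = 0$. Existence follows by Fredholm index zero. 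The main obstacle lies here: on a polygon $\cD^T$ is not compact on $L^2(\Gamma)$, so Riesz--Schauder does not apply verbatim; I would close the gap by appealing to the invertibility theory for layer potentials on Lipschitz and piecewise-smooth domains referenced in \cite{kress1989linear} rather than reproving it.
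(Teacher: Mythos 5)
The paper does not prove this theorem --- it is one of four classical reduction statements recorded without proof, with existence and uniqueness delegated to \cite{kress1989linear} in the remark closing Section~2.1 --- so there is no in-paper argument to compare against. That said, your sketch is essentially the standard route: represent $u=\cS[\sigma]$, use the interior jump relation to arrive at $\sigma/2+\cD^{T}\sigma=f$, augment by the rank-one term $\mathbf{1}\int_0^L\sigma$ to remove the null space, verify that $\int f=0$ forces $\int\sigma=0$ (your identity $\int_0^L k(s,t)\,{\rm d}s=-1/2$, i.e.\ $\cD[\mathbf{1}]\equiv-1/2$ on $\Gamma$, is correct and yields $L\int\sigma=\int f$), and prove uniqueness via continuity of the single layer across $\Gamma$, the $O(|\by|^{-1})$ decay of $\cS[\sigma]$ when $\int\sigma=0$, and the jump of $\bnu\cdot\nabla\cS$. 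One statement is inaccurate, though not fatally: you write that ``constants lie in the kernel of $I/2+\cD^T$.'' Constants lie in $\ker(I/2+\cD)$ (this is precisely $\cD[\mathbf{1}]=-1/2$ on $\Gamma$); the kernel of the adjoint $I/2+\cD^T$ is instead the one-dimensional span of the equilibrium density, which is not a constant on a general polygon. What you actually need --- and what is true --- is that the range of $I/2+\cD^T$ is $\{f:\int_0^L f=0\}$, the orthogonal complement of $\ker(I/2+\cD)={\rm span}\{\mathbf{1}\}$, and that the rank-one correction is transverse to the equilibrium density because the latter has nonzero integral (a fact your $\int\sigma=0$ computation makes implicit). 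Finally, you correctly flag the genuine technical obstacle: $\cD$ is not compact on $L^2(\Gamma)$ for a polygon, so Riesz--Schauder does not apply verbatim and one needs the Lipschitz-domain layer potential theory; deferring to the literature there is the appropriate resolution and matches the paper's own treatment of the result as classical.
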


\begin{theorem}[Exterior Neumann problem for Laplace's equation]
For every $f:[0,L] \rightarrow \mathbb{C}$ in $L^{2}[0,L]$ there exists a unique $\sigma \in L^{2}[0,L]$ which satisfies
\begin{align}
f(s)=  \frac{\sigma(s)}{2}+ \int_0^L k(s,t) \,\sigma(t)\,{\rm d}t,
\label{bie_extn}
\end{align}
Moreover, the solution to the interior Neumann problem for Laplace's equation with boundary data $f$ is given by $u(\by) = \cS[\sigma](\by)$ for all $\by \in \Omega.$
\end{theorem}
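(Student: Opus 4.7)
The plan is to verify that the ansatz $u(\by)=\cS[\sigma](\by)$ reduces the exterior Neumann boundary condition to the stated second-kind integral equation, and then to invoke Fredholm theory on $L^{2}[0,L]$ to obtain existence and uniqueness of $\sigma$. The structural part of the argument should be essentially identical to the proofs of the previous three boundary-integral-equation theorems in this section, with only the jump relation selected differently.

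First, I would observe that any $u=\cS[\sigma]$ is automatically harmonic in $\mathbb{R}^{2}\setminus\Omega$, since $G(\cdot,\by)$ is harmonic off the diagonal. To impose the Neumann condition, let $\bx\to\bx_{0}=\gamma(s)\in\Gamma$ from the exterior and apply the ``from the outside'' jump relation of \cref{thm:potlim} to $\bnu(\bx_{0})\cdot\nabla\cS[\sigma]$. Combined with \cref{def_singder}, this gives
\[
\lim_{\bx\to\bx_{0}\text{, ext.}}\bnu(\bx_{0})\cdot\nabla\cS[\sigma](\bx) \;=\; \bnu(\bx_{0})\cdot\nabla\cS[\sigma](\bx_{0}) \;\pm\; \tfrac{1}{2}\sigma(\bx_{0}),
\]
where the sign is dictated by the normal convention. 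The preceding Proposition then identifies the principal-value term with $\cD^{T}\sigma$, whose parametrised form is $\int_{0}^{L}k(s,t)\sigma(\gamma(t))\,dt$. Setting the limit equal to $f(\gamma(s))$ produces the displayed integral equation. I would also verify the behavior at infinity by expanding $\cS[\sigma](\by)=-\tfrac{1}{2\pi}\log|\by|\int_{0}^{L}\sigma(\gamma(t))\,dt+O(|\by|^{-1})$ and checking, via $\int_{0}^{L}k(s,t)\,ds=\cD[\mathbf{1}](\gamma(t))=-\tfrac{1}{2}$, that integrating the BIE against the constant function $1$ enforces exactly the normalization between $\int\sigma$ and $\int f$ needed for the decay condition in the theorem statement.

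For existence and uniqueness I would apply Fredholm theory to the operator $A=\tfrac{1}{2}I+\cD^{T}$ on $L^{2}[0,L]$. Injectivity follows from uniqueness for the exterior Neumann problem: any $\sigma$ in the kernel of $A$ produces a single-layer $\cS[\sigma]$ that is harmonic in $\mathbb{R}^{2}\setminus\Omega$ with vanishing Neumann data and the prescribed behavior at infinity, so $\cS[\sigma]\equiv 0$ outside; continuity of the single layer across $\Gamma$ together with uniqueness for the interior Dirichlet problem then extends this to $\Omega$, and the jump of the normal derivative (which equals $\sigma$) therefore vanishes. Surjectivity follows once $A$ is known to be Fredholm of index zero.

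The main obstacle is this last Fredholm step. On a smooth $\Gamma$, $\cD^{T}$ is compact and the Riesz--Schauder theorem applies immediately, but on a polygonal (only Lipschitz) boundary compactness fails, so one must appeal to the deeper $L^{2}$ theory of layer potentials on Lipschitz domains (Verchota and related work) to conclude that $\tfrac{1}{2}I+\cD^{T}$ is Fredholm of index zero. I would invoke this result as a black box rather than reprove it. The remaining pieces --- harmonicity of the ansatz, the jump-relation computation, the adjoint identification, and the far-field check --- are then routine.
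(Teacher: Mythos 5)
The paper does not actually prove this theorem. It is one of four classical reduction theorems stated in Section~3.2 without argument; existence and uniqueness are attributed to standard references (the remark at the end of Section~2 points to \cite{kress1989linear}). Your proposal therefore supplies a proof where the paper supplies none, and it follows the canonical route: represent $u$ as a single-layer potential, apply the exterior jump relation for $\bnu\cdot\nabla\cS$, identify the principal-value term with $\cD^{T}$ via the preceding proposition, and then invoke Fredholm theory together with uniqueness of the BVP. That is the standard argument. You also correctly single out the one genuinely nontrivial ingredient: on a polygonal (hence merely Lipschitz) boundary $\cD^{T}$ fails to be compact, so Riesz--Schauder does not apply directly and one must import the $L^{2}$ Fredholm theory of layer potentials on Lipschitz domains (Verchota et al.). That observation is exactly the right thing to flag.

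One substantive gap: you write the jump relation with an unresolved $\pm\tfrac{1}{2}\sigma$, claim ``the sign is dictated by the normal convention,'' and then assert that this ``produces the displayed integral equation.'' If you actually carry the sign through using the paper's own \cref{thm:potlim}, the exterior limit is $\bnu\cdot\nabla\cS[\sigma]\big|_{\mathrm{PV}} - \tfrac{1}{2}\sigma$, which gives
$f(s) = -\tfrac{\sigma(s)}{2} + \int_{0}^{L}k(s,t)\,\sigma(t)\,{\rm d}t$,
i.e.\ a \emph{minus} sign on the identity term, whereas (\ref{bie_extn}) has a plus sign. This is consistent with the classical theory, in which the invertible exterior-Neumann operator is $-\tfrac{1}{2}I + \cD^{T}$, while $\tfrac{1}{2}I + \cD^{T}$ (the interior-Neumann operator) has a one-dimensional kernel. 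Your proposed far-field verification would have caught this had you pushed it through: with the plus sign, pairing (\ref{bie_extn}) against the constant function and using $\cD[\mathbf{1}]=-\tfrac{1}{2}$ forces $\int_{0}^{L}f = 0$, a compatibility constraint that the exterior Neumann problem does not impose. So the sign must be resolved, not hedged, and the far-field check is diagnostic rather than decorative; I would carry both out explicitly rather than wave at them.
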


\subsection{Corner expansions}

In the remainder of this section, we assume $\Gamma$ is an open wedge with sides of length one and interior angle $\pi \alpha$ with $0 <\alpha<2.$ Let $\gamma:[-1,1] \rightarrow \Gamma$ be an arc length parametrization of $\Gamma$ and $\nu:[-1,1] \rightarrow \mathbb{R}^2$ be the inward-pointing normal to $\Gamma.$ 
The following theorem  gives an explicit representation of the solutions of the boundary integral equation (\ref{bie_intd}) in this geometry.

\begin{theorem}[\cite{serkhacha}]\label{thm_cord}
Suppose that $0<\alpha<2$ and that $N$ is a positive integer. Let 
$\lceil \cdot \rceil$ and $\lfloor \cdot \rfloor$ denote the ceiling and floor functions,
respectively, and define $\overline{L},$ $\underline{L},$ $\overline{M},$ and 
$\underline{M}$ by the following formulas
\begin{align}
&\overline{L} = \left\lceil\frac{\alpha N}{2} \right \rceil, \\
&\underline{L} = \left\lfloor\frac{\alpha N}{2} \right \rfloor,\\
&\overline{M} = \left\lceil\frac{(2-\alpha) N}{2} \right \rceil, \\
&\underline{M} = \left\lfloor\frac{(2-\alpha) N}{2} \right \rfloor.
\end{align}
Suppose further that $\sigma$ is defined via the formula
\begin{align}
\sigma(t) =& b_0+\sum_{i=1}^{\overline{L}} b_i |t|^{\frac{2i-1}{\alpha}}+\sum_{i=1}^{\underline{M}} b_{\overline{L}+i} |t|^\frac{2i}{2-\alpha}\left(\log|t|\right)^{\sigma_{N,\alpha}(i)}\nonumber\\
&+\sum_{i=1}^{\overline{M}}c_i \sgn(t) |t|^\frac{2i-1}{2-\alpha}+\sum_{i=1}^{\underline{L}}c_{\overline{M}+i} \sgn(t) |t|^\frac{2i}{\alpha}\left(\log|t|\right)^{\nu_{N,\alpha}(i)}\label{eqn_rhfm}
\end{align}
where $b_0,b_1,\dots,b_N$ and $c_1,c_2,\dots,c_N$ are arbitrary real numbers and the functions $\sigma_{\alpha,N}(i)$ and $\nu_{\alpha,N}(i)$ are defined as follows
\begin{align}
\sigma_{N,\alpha}(i) = \begin{cases}
1 \quad\quad & \,\text{if}\,\, \frac{2i}{2-\alpha} = \frac{2j-1}{\alpha}\,\,\text{for some}\,\, j \in \mathbb{Z},\, 1\le j \le \left\lceil \frac{\alpha N}{2} \right\rceil\\
0 \quad\quad & \,\text{otherwise},
\end{cases}
\end{align}
\begin{align}
\nu_{N,\alpha}(i) = \begin{cases}
1 \quad\quad & \,\text{if}\,\, \frac{2i}{\alpha} = \frac{2j-1}{2-\alpha}\,\,\text{for some}\,\, j \in \mathbb{Z},\, 1\le j \le \left\lceil \frac{(2-\alpha) N}{2} \right\rceil\\
0 \quad\quad & \,\text{otherwise}.
\end{cases}
\end{align}
\noindent If $f$ is defined by
\begin{align}
f(t) = -\frac{\sigma(s)}{2} +\int_{-1}^1 k(t,s)  \sigma(t)\,{\rm d}t.
\label{eqn_grho}
\end{align}
and $\sigma$ is defined by (\ref{eqn_rhfm}) then there exist two sequences of real numbers $\beta_0$, $\beta_1,\,\dots$ and $\gamma_0,$ $\gamma_1,\,\dots$ such that
\begin{align}
f(t) = \sum_{n=0}^\infty\beta_n |t|^n + \sum_{n=0}^\infty \gamma_n \sgn(t) |t|^n,
\label{eqn_gfrm}
\end{align}
for all $-1 \le t \le 1.$ Conversely, suppose that $f$ has the form (\ref{eqn_gfrm}). Suppose further that $N$ is an arbitrary positive integer. Then, for all angles $\pi \alpha$ there exist unique real numbers $b_0,\,b_1,\,\dots,\,b_N$ and $c_0,\,c_1,\,\dots,\,c_N$ such that $\rho,$ defined by (\ref{eqn_rhfm}), solves equation  (\ref{eqn_grho}) to within an error ${\rm O}(t^{N+1}).$ 
\end{theorem}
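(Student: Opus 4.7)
The plan is to exploit the fact that on a straight wedge the kernel $k(s,t)$ vanishes whenever $s$ and $t$ lie on the same side, so the operator in (\ref{eqn_grho}) reduces to a single integral against the density restricted to the opposite side. Parametrizing each side by arclength from the corner and writing $x = |s|$, $y = |t|$, on opposite sides the kernel takes the closed form
\[
k(s,t) = \frac{\sin(\pi\alpha)}{2\pi} \cdot \frac{|s|}{x^{2}+y^{2}-2xy\cos(\pi\alpha)}.
\]
Splitting $\sigma$ into its even and odd parts in $\sgn(t)$ decouples the two-edge system into two scalar integral equations on $[0,1]$ with the same convolution-type kernel and a sign difference; this is why the ansatz (\ref{eqn_rhfm}) splits naturally into the $b_{i}$ block and the $c_{i}$ block.

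First I would analyze the action of the operator on a single monomial $y^{\mu}$. Substituting $y = xz$ rewrites $\int_{0}^{1} \frac{x\,y^{\mu}\,dy}{x^{2}+y^{2}-2xy\cos\pi\alpha}$ as $x^{\mu}\int_{0}^{1/x} \phi_{\alpha}(z) z^{\mu-1}\,dz$ with $\phi_{\alpha}(z) = z/(1+z^{2}-2z\cos\pi\alpha)$. The infinite Mellin integral $M(\mu) = \int_{0}^{\infty}\phi_{\alpha}(z) z^{\mu-1}\,dz$ is evaluated by contour integration, factoring the denominator as $(z-e^{i\pi\alpha})(z-e^{-i\pi\alpha})$, and yields an explicit meromorphic function of $\mu$. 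The finite-range remainder $\int_{1/x}^{\infty}\phi_{\alpha}(z) z^{\mu-1}\,dz$ is expanded by expanding $\phi_{\alpha}(z)$ as a geometric series in $1/z$ and integrating term by term, producing an analytic Taylor series in $x$. Thus applying $-\tfrac{1}{2} + K^{T}$ to $y^{\mu}$ produces a singular piece $(M(\mu)-\tfrac{1}{2})\,x^{\mu}$ (or $(M(\mu)+\tfrac{1}{2})\,x^{\mu}$ on the odd block) plus an analytic series $\sum_{n\ge 0} d_{n}(\mu) x^{n+1}$.

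The defining property of the exponents in (\ref{eqn_rhfm}) is that the equations $M(\mu) = \tfrac{1}{2}$ and $M(\mu) = -\tfrac{1}{2}$ admit as solutions exactly the two arithmetic progressions $\mu \in \{(2i-1)/\alpha\}\cup\{2i/(2-\alpha)\}$ on the even block and $\mu \in \{(2i-1)/(2-\alpha)\}\cup\{2i/\alpha\}$ on the odd block, mirrored via $\alpha \leftrightarrow 2-\alpha$. For these $\mu$ the singular $x^{\mu}$ contribution cancels and only an analytic power series in $|t|$ (times $\sgn(t)$ on the odd block) survives, which gives the forward statement. The logarithmic factors activated by $\sigma_{N,\alpha}$ and $\nu_{N,\alpha}$ appear precisely in the resonant cases where an exponent of the form $2i/(2-\alpha)$ collides with one of the integer remainder powers $n+1$ produced by a monomial of exponent $(2j-1)/\alpha$ (and symmetrically on the odd block); there the Mellin meromorphic function has a confluent pole and the $\log|t|$ arises as $\partial_{\mu} x^{\mu}$.

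For the converse, I would invert the map $(b_{i},c_{i}) \mapsto (\beta_{n},\gamma_{n})$ recursively. Ordering the exponents produced by the analysis above by increasing real part, the lowest analytic power of $|t|$ contributed by each $b_{i}$ (respectively $c_{i}$) is fixed by the Mellin residue computation and is distinct for each $i$, so the linear map to the coefficients of (\ref{eqn_gfrm}) is lower-triangular with nonzero diagonal (the nondegeneracy coming from $M'(\mu)\ne 0$ at each admissible $\mu$). Solving this triangular system order by order yields the unique $b_{i},c_{i}$ and a residual of order ${\rm O}(t^{N+1})$ by truncation. The main obstacle throughout is the careful bookkeeping of the Mellin residues, especially in the resonant (logarithmic) regime where two exponents coincide; tracking the confluent-pole structure and verifying that no unintended singular contributions appear is the technical heart of the argument of \cite{serkhacha}.
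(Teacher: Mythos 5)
This theorem is imported verbatim from \cite{serkhacha}; the present paper gives no proof of it, so there is nothing in-paper to compare against line by line. What follows therefore assesses your sketch against what is actually done in \cite{serkhacha} and its companion \cite{serkh2016solution}.

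Your high-level route is the right one, and it is essentially the route taken in those references: the double-layer kernel vanishes when source and target lie on the same straight edge, so only the cross-edge kernel matters; splitting $\sigma$ into even and odd parts decouples the two legs into scalar equations on $[0,1]$; the substitution $y=xz$ turns the action on a monomial $y^{\mu}$ into $x^{\mu}$ times a Mellin-type integral $M(\mu)$ plus a tail that is analytic in $x$; the admissible fractional exponents are precisely the roots of $-\tfrac{1}{2}\pm M(\mu)=0$; and the $\log|t|$ terms arise at resonances where a singular exponent collides with an integer (confluent poles of $M$). All of this is consistent with the cited analysis. Two concrete issues are worth flagging.

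First, watch the conventions. In the paper's notation $k(s,t)=K(\gamma(s),\gamma(t))$ has the normal at $\gamma(s)$, so the perpendicular distance appearing in the numerator of the cross-edge kernel is $|t|\sin\pi\alpha$, not $|s|$; with the kernel $k(t,s)$ that actually occurs in \eqref{eqn_grho} (normal at the integration point), the numerator is $|s|\sin\pi\alpha$, which is what your Mellin integral uses, but your displayed formula labeled ``$k(s,t)$'' conflicts with your subsequent computation. A closely related point: with $M(\mu)=\sin(\mu\pi(1-\alpha))/(2\sin\pi\mu)$, the equation $M(\mu)=\tfrac{1}{2}$ yields $\{2i/\alpha\}\cup\{(2i-1)/(2-\alpha)\}$ and $M(\mu)=-\tfrac{1}{2}$ yields $\{(2i-1)/\alpha\}\cup\{2i/(2-\alpha)\}$. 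Depending on the orientation of $\bnu$ and the sign of $\sin\pi\alpha$ as $\alpha$ crosses $1$, the assignment of these progressions to the $b_{i}$ and $c_{i}$ blocks can flip relative to what you wrote; this needs to be tracked, since the theorem's claim is specifically that the $b_{i}$ block carries $(2i-1)/\alpha$ and $2i/(2-\alpha)$.

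Second, and more seriously, your argument for the converse does not work as stated. You claim the map $(b_{i})\mapsto(\beta_{n})$ is lower triangular because ``the lowest analytic power of $|t|$ contributed by each $b_{i}$ is distinct.'' That is not the case. Expanding the tail $\int_{1/x}^{\infty}\phi_{\alpha}(z)z^{\mu-1}\,dz$ in powers of $1/z$ and integrating term by term gives an analytic remainder of the form $\sum_{k\ge 0}\dfrac{U_{k}(\cos\pi\alpha)}{1+k-\mu}\,x^{1+k}$, so every admissible $\mu_{i}$ contributes, generically, to every integer power $|t|^{1},|t|^{2},\dots$; the Jacobian from $(b_{0},\dots,b_{N})$ to $(\beta_{0},\dots,\beta_{N})$ is a dense matrix, not a triangular one, and the condition $M'(\mu)\neq 0$ does not by itself give invertibility. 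Establishing the nondegeneracy of this map (including at resonant angles, where the logarithmic terms enter and the matrix entries have confluent-pole singularities in $\alpha$) is precisely the heavy technical work of \cite{serkhacha}, and it cannot be replaced by a triangular solve. As written, this step of your proposal has a genuine gap.
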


\begin{remark1}
A similar result holds for the case where the identity term in (\ref{eqn_grho}) is replaced by its negative; the change in sign corresponds to replacing the boundary integral equation for the interior Dirichlet problem with the boundary integral equation corresponding to exterior Dirichlet problem. Similar expansions also hold for both the exterior and interior Neumann problems, in which case the singular powers are obtained by subtracting one from the singular powers arising in the Dirichlet problem.
\end{remark1}

The following corollary, proved in \cite{serkhacha} gives a characterization of the solutions to the Dirichlet and Neumann boundary integral equations in the vicinity of a corner.

\begin{corollary}
Let $\Gamma$ be the boundary of a polygonal region and suppose one of its corners has interior angle $\pi \alpha$ where $\alpha \in (0,2).$ Let $\gamma:(-\delta,\delta) \rightarrow \mathbb{R}^2$ be an arclength parametrization of $\Gamma$ in the vicinity of the corner, with $\gamma(0)$ coinciding with the corner. If the boundary data, $f,$ is analytic on either side of the corner then there exist unique real numbers $b_0,\,b_1,\,\dots,\,b_N$ and $c_0,\,c_1,\,\dots,\,c_N$ such that the density, $\rho,$ defined by (\ref{eqn_rhfm}) satisfies the interior Dirichlet boundary integral equation to within an error ${\rm O}(t^{N+1})$ for $t$ within $\delta$ of the corner. For the Neumann problems the representation is the same with the powers in the expansion reduced by one.
\end{corollary}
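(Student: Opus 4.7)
The plan is to reduce the polygonal problem to the infinite-wedge case already handled by Theorem~\ref{thm_cord}. I would first shrink $\delta$ if necessary so that both edges meeting at the corner are straight for arclength at least $\delta$ and no other corner of $\Gamma$ lies within distance $\delta$ of $\gamma(0)$. Let $\Gamma_{0}$ be the infinite wedge extending these two local edges; then for $s,t\in(-\delta,\delta)$ the polygon and wedge double-layer kernels coincide.

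The central observation is that the polygon double-layer operator differs from the wedge double-layer operator only through integration over portions of the boundary whose arclength distance from $\gamma(0)$ exceeds $\delta$. Since $k(\gamma(t),\bx)$ is real-analytic in $t$ for $t$ near $0$ whenever $\bx$ is bounded away from $\gamma(0)$, this difference is smoothing in the following sense: applied to any $L^{2}$ density it produces a function that is real-analytic in a neighborhood of the corner. The same argument shows that truncating the wedge integral from $(-1,1)$ to $(-\delta,\delta)$ alters the output by a function analytic at $t=0$.

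Combining these facts, if $\rho^{*}$ denotes the unique $L^{2}$ solution of the polygon Dirichlet BIE with right-hand side $f$, then $\rho^{*}|_{(-\delta,\delta)}$ satisfies the local wedge BIE with right-hand side $\tilde f(t) := f(t) - h(t)$, where $h$ collects the analytic far-field and truncation contributions. Because $f$ is analytic on each side of $\gamma(0)$, the function $\tilde f$ admits an expansion of the form (\ref{eqn_gfrm}). Applying the converse direction of Theorem~\ref{thm_cord} to $\tilde f$ yields unique reals $b_{0},\ldots,b_{N}$ and $c_{0},\ldots,c_{N}$ such that the density $\rho$ defined by (\ref{eqn_rhfm}) solves the wedge BIE with right-hand side $\tilde f$ to within $O(|t|^{N+1})$. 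Reversing the smoothing identification, the same $\rho$ satisfies the polygon BIE with right-hand side $f$ to the same order; uniqueness of the coefficients is inherited from the forward direction of Theorem~\ref{thm_cord}, since any nontrivial change in $(b_{i}),(c_{i})$ would produce a low-order term of the form in (\ref{eqn_gfrm}), contradicting the $O(|t|^{N+1})$ error.

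The principal obstacle is the Neumann case, in which $k(t,s)$ is replaced by its transpose $k(s,t)$ (the normal derivative of the single-layer operator). The localization reasoning is unchanged; only the wedge ingredient must be replaced. I would invoke the Neumann analogue of Theorem~\ref{thm_cord} indicated in the preceding remark and established in \cite{serkhacha}, whose admissible expansion has the same structure as (\ref{eqn_rhfm}) but with every exponent reduced by one. This reduction reflects the action of normal differentiation on single-layer potentials of densities behaving like $|t|^{\mu}$ and recovers the $\mu\in(-1/2,0)$ leading-order singularity emphasized in the introduction. With this substitute in place the argument closes identically.
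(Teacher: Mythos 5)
The paper itself does not prove this corollary; it explicitly attributes the result to \cite{serkhacha}, so there is no in-paper argument to compare against. That said, your localization strategy is the natural route and is consistent with how the cited source establishes it: peel off the contribution from the boundary outside a small neighborhood of the vertex (which is analytic near the corner, by the type of estimate in Appendix~A), apply Theorem~\ref{thm_cord} to the resulting local wedge problem with modified data $\tilde f = f - h$, and transfer back.

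Two points deserve to be made explicit rather than elided. First, the ``reversal'' step only closes if you specify the extension of the local $\rho$ from $(-\delta,\delta)$ to all of $\Gamma$ to be the true density $\rho^{*}$ outside $(-\delta,\delta)$; only with that extension is the far-field integral generated by the extended density precisely the $h$ you subtracted, so that the polygon BIE residual near the corner reduces to the wedge residual and is $O(|t|^{N+1})$. As written, ``the same $\rho$ satisfies the polygon BIE'' glosses over this bookkeeping. Second, your uniqueness argument implicitly relies on the fact that the correspondence in Theorem~\ref{thm_cord} between $(b_i),(c_i)$ and the output expansion coefficients $(\beta_n),(\gamma_n)$ is injective up to order $N$; this is indeed what the word ``unique'' in the converse direction of Theorem~\ref{thm_cord} is asserting, so the appeal is legitimate, but it is cleaner to cite that uniqueness directly than to argue informally about producing ``a low-order term.'' The Neumann case via the exponent shift, invoking the remark following Theorem~\ref{thm_cord}, is fine.
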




\section{Numerical preliminaries \label{sec:nprelim}}
In this section we summarize the numerical tools which are necessary for the main result. In particular we summarize the method for discretizing the boundary integral equation for the Dirichlet problem described in \cite{hoskins2019numerical}, which uses the expansion in Theorem \ref{thm_cord}.
\subsection{Discretization of the Dirichlet problem}\label{sec:disc_dir}
In this section we sketch an algorithm for solving the interior Dirichlet boundary integral equation using a Nystr\"{o}m method; the exterior Dirichlet boundary integral equation can be discretized in a similar way. See \cite{hoskins2019numerical} for a thorough description of the method. 

The Nystr\"{o}m method proceeds as follows. We begin by constructing a discretization of the boundary $\Gamma$ with nodes $s_1,\dots,s_N,$ and weights $w_1,\dots,w_N,$ which enable interpolation of the left- and right-hand sides of the boundary integral equation
\begin{align}\label{eqn:numpreint}
f(s) = -\frac{\sigma(s)}{2} + \int_0^L k(t,s) \,\sigma(t)\, {\rm d} t
\end{align}
with precision $\epsilon.$ In other words, given $f(s_i),-\sigma(s_i) + \int_0^L k(t,s_i) \,\sigma(t)\, {\rm d} t,$ for $i=1,\dots,N,$ the values $f(s)$ and $-\sigma(s)/2 + \int_0^L k(t,s) \,\sigma(t)\, {\rm d} t$ can be obtained for all $0 \le s \le L$ to within $\epsilon.$

Once these nodes and weights have been generated we proceed by enforcing equality of (\ref{eqn:numpreint}) at the discretization nodes, which yields the system of equations
\begin{align}
f(s_i) \sqrt{w_i} = -\frac{\sigma(s_i)\sqrt{w_i}}{2} +  \sqrt{w_i}\int_0^L k(t,s_i)\, \sigma(t)\,{\rm d}t,\quad i=1,\dots,N. \label{eqn_1std}
\end{align}
We note that scaling by the square root of the weights in the above equation is equivalent to solving the problem in the $L^2$ sense, and results in discretized operators with condition numbers which are close to those of the original physical systems \cite{bremer3}. The new unknowns are $\sigma_i = \sigma(s_i) \sqrt{w_i},$ $i=1,\dots,N.$ Next, for each interpolation node $s_i$ we find a collection of weights $W_{ij}$ such that
\begin{align}
\left|\int_0^L k(t,s_i)\, \sigma(t)\,{\rm d}t- \sum_{j=1}^N W_{ij} \sigma_j\, \sqrt{w_j} \right| < \epsilon,
\end{align}
resulting in the linear system
\begin{align}
  -\frac{\sigma_i}{2} +  \sum_{j=1}^N W_{ij} \sigma_j \sqrt{w_i w_j}=f(s_i) \sqrt{w_i},\quad i,j=1,\dots,N. 
\end{align}


\subsubsection{Obtaining interpolation nodes}

The boundary $\Gamma$ is separated into a collection of intervals which are at least a fixed distance $\delta$  (measured in terms of arclength) away from a corner and the collection of intervals of length $2 \delta$ centered about each corner. The former are discretized using a standard smooth quadrature rule such as nested Gauss-Legendre quadrature while the latter are discretized using a custom set of interpolation nodes constructed in the following way.

First, all functions of the form $x^\mu,$ $\mu \in \{0\} \cup [1/2,50],$ $x \in [0,1]$ are discretized using nested Gauss-Legendre panels in $x$ and a single Gauss-Legendre panel in $\mu$. This creates a $N \times M$ matrix where $N$ denotes the number of spatial discretization points $r_i$ and $M$ denotes the number of $\mu_j$ chosen. $M$ and $N$ are increased until it is guaranteed that using Lagrange interpolation from the nested discretization the function $x^\mu$ can be interpolated to within an $L^2$ error less than $\epsilon$ on the interval $[0,1]$ for any $\mu$ in the specified range. A singular value decomposition is then performed on the $N\times M$ matrix. Let $K$ denote the number of singular values greater than $\epsilon.$ The right singular vectors correspond to discretizations of an orthonormal set of functions $\phi_1,\dots,\phi_K$ such that $x^\mu$ is in the span of $\phi_1,\dots,\phi_K$ to within an accuracy of $\epsilon.$

Finally, a set of interpolation points $x_j,$ $j=1,\dots,K$ and quadrature weights $w_j,$ $j=1,\dots,K$ are chosen for $\phi_1,\dots,\phi_K$ such that the matrix $U_{ij} = \phi_i(x_j) \sqrt{w_j}$ is well-conditioned. In practice suitable interpolation points can be obtained by using the roots of $\phi_{K+1}$ and calculating the corresponding weights by solving a linear system. The corresponding discretization nodes and weights for the corner-containing intervals of $\Gamma$ are obtained by suitable translations and scalings of $\{x_j\}$ and $\{w_j\}.$

\subsubsection{Construction of quadrature rules}

Once the discretization has been constructed it is necessary to construct appropriate quadrature for the integrals appearing in equation (\ref{eqn_1std}). When $s_i$ and $t$ do not belong to the same corner panel (in particular when either is not itself contained in a corner panel) then the weights and nodes associated with the discretization can be used as the quadrature rule. When $s_i$ corresponds to a corner panel special care must be taken. Instead, using an algorithm for generating generalized Gaussian quadratures \cite{bremer2010}, quadrature nodes are chosen which integrate
\begin{align}
\int_{0}^\delta k(t,s_j) \tilde{\phi}_j(t)\,{\rm d}t
\end{align} 
where $\tilde{\phi}_j$ is a suitably scaled and translated copy of the singular function obtained in the discretization step, and for ease of exposition we assume that the corner panel corresponds to $(-\delta,\delta)$ in the parametrization with $t=0$ corresponding to the corner itself. Moreover, in light of symmetry between the two legs of the wedge it suffices to design quadratures assuming  $s_j$ lies in the half of a corner panel parametrized by $(-\delta,0).$
\begin{remark1}
Due to scale invariance, it suffices to compute quadratures for 
\begin{align}
\int_{0}^1 k(t,-x_j) {\phi}_j(t)\,{\rm d}t,
\end{align} 
where $x_j$ was one of the original discretization nodes generated on the interval $[0,1].$
\end{remark1}
\begin{remark1}
By interpolating from the discretization nodes to these quadrature nodes we obtain a set of weights $\tilde{W}_{i,j}$ such that if $s_1,\dots,s_{2K}$ correspond to the discretization of a corner parametrized by $(-\delta,\delta)$ with $0$ corresponding to the corner then
\begin{align}
\left| \int_{-\delta}^\delta k(t,s_i) \, \tilde{\phi}_m(t)\,{\rm d}t - \sum_{j=1}^{2K} \tilde{W}_{ij} \tilde{\phi}_{m}(t_j) \right| < \epsilon
\end{align}
for all $i=1,\dots,2K$ and $m=1,\dots,K.$
\end{remark1}

After all the quadratures have been constructed the result is an $N \times N$ linear system the solution of which gives an approximation to $\sigma$ sampled at the discretization nodes. 

\begin{definition}\label{def:seps}
Let $S_\epsilon \subset L^2([0,L])$ denote the set of functions which can be interpolated from their values at the $N$ discretization nodes to any point in $[0,L]$ with a relative $L^2$ accuracy of $\epsilon.$ That is to say that for $f \in S_\epsilon$ if $\tilde{f}:[0,L] \to \mathbb{R}$ denotes the function obtained by interpolating using the values $f(s_1),\dots,f(s_N)$ then $\|f -\tilde{f}\|_{L^2} < \epsilon.$ 
\end{definition}

The results of this algorithm are summarized in the following theorem (see \cite{}).
\begin{theorem}
Let $A$ be the $N\times N$ matrix obtained by discretizing the interior Dirichlet problem in the preceding manner. In particular if $f \in S_\epsilon $ is piecewise analytic and ${\bf f} = (\sqrt{w_1}f(s_1),\dots,\sqrt{w_N}f(s_N))^T$ then
\begin{align}
\underline{\bf \sigma} = A^{-1} {\bf f}
\end{align}
can be interpolated to a function $\tilde{\sigma}$ which is within $\epsilon$ of the true density $\sigma$ in an $L^2$-sense. 
\end{theorem}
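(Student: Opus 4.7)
The plan is to exploit the structural information about the true density $\sigma$ provided by the corner expansion, and to show that the discrete operator $A$ faithfully reproduces the continuous operator on the subspace where $\sigma$ lives. First, I would invoke the Corollary: on each panel the true density is either piecewise analytic (on smooth panels) or, on a corner panel, an $O(t^{N+1})$-perturbation of a finite linear combination of the singular basis functions $|t|^{(2i-1)/\alpha},\ |t|^{2i/(2-\alpha)},\dots$ appearing in \eqref{eqn_rhfm}. By construction, these singular powers lie in the span of $\phi_1,\dots,\phi_K$ to accuracy $\epsilon$, and nested Gauss--Legendre handles the analytic pieces. Hence $\sigma\in S_\epsilon$ in the sense of Definition \ref{def:seps}, and the vector $\underline{\boldsymbol{\sigma}}^{*} := (\sqrt{w_i}\,\sigma(s_i))_{i=1}^{N}$ is a meaningful continuum target.

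Second, I would show that $A\,\underline{\boldsymbol{\sigma}}^{*} = {\bf f} + {\bf r}$ with $\|{\bf r}\|_{2}=O(\epsilon)$. The identity term $-\sigma/2$ is reproduced exactly because it acts diagonally in the Nystr\"om discretization. On a row $s_i$ lying off the corner panels, $k(t,s_i)\sigma(t)$ is smooth enough that the nested Gauss--Legendre weights achieve the $\epsilon$-accuracy. On a corner row, the generalized Gaussian quadrature weights $\tilde W_{ij}$ were constructed precisely so that $\int k(t,s_i)\,\tilde\phi_m(t)\,{\rm d}t$ is reproduced to within $\epsilon$ for each basis function $\tilde\phi_m$; since $\sigma$ restricted to the corner panel is an $\epsilon$-accurate superposition of the $\tilde\phi_m$ (by Step 1), linearity propagates the bound to $\int k(t,s_i)\,\sigma(t)\,{\rm d}t$. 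Summing these panel contributions yields the desired $O(\epsilon)$ residual.

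Third, subtracting $A\,\underline{\boldsymbol{\sigma}}={\bf f}$ gives $A(\underline{\boldsymbol{\sigma}}-\underline{\boldsymbol{\sigma}}^{*})=-{\bf r}$, so $\|\underline{\boldsymbol{\sigma}}-\underline{\boldsymbol{\sigma}}^{*}\|_{2}\le \|A^{-1}\|_{2}\,\|{\bf r}\|_{2}$. Interpolating both sides using the well-conditioned matrix $U_{ij}=\phi_i(x_j)\sqrt{w_j}$ produces $\tilde\sigma$ and an interpolant $\widetilde{\sigma^{*}}$ of the true samples; a triangle inequality bounds $\|\tilde\sigma-\sigma\|_{L^2}$ by $\|\tilde\sigma-\widetilde{\sigma^{*}}\|_{L^2}+\|\widetilde{\sigma^{*}}-\sigma\|_{L^2}$, the first being $O(\epsilon)$ by the vector estimate together with the good conditioning of $U$, and the second being $O(\epsilon)$ by virtue of $\sigma\in S_\epsilon$.

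The main obstacle is the uniform control of $\|A^{-1}\|_{2}$ in $N$ and in the corner panel radius $\delta$. Fredholm theory provides boundedness of the continuous inverse $(-\tfrac{1}{2}I+\cD^{T})^{-1}$ on $L^{2}(\Gamma)$, but transferring this bound to the $\sqrt{w_i}$-scaled discrete operator requires the argument of \cite{bremer3}, namely that this particular $L^{2}$-consistent scaling forces the discrete condition number to mirror the continuous one. Verifying this when the underlying basis is the corner-adapted $\{\phi_i\}$ rather than a classical polynomial basis is the delicate point, and it is carried out in detail in \cite{hoskins2019numerical}.
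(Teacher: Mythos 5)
The paper does not actually prove this theorem; it is stated with an empty citation placeholder, and from context the intended reference is \cite{hoskins2019numerical}. Your proposal therefore cannot be checked line-by-line against a proof in the paper, but it is a reasonable and essentially correct reconstruction of the consistency-plus-stability argument that such a reference contains. You correctly isolate the three ingredients: (i) $\sigma \in S_\epsilon$, so that the sample vector $\underline{\boldsymbol\sigma}^{*}$ is a meaningful target and interpolation recovers $\sigma$ to $O(\epsilon)$; (ii) the Nystr\"om matrix $A$ is $\epsilon$-consistent on $S_\epsilon$, since the diagonal identity term is exact, rows centered on smooth panels are handled by Gauss--Legendre, and rows centered on corner panels are handled by the generalized Gaussian quadratures built for the $\tilde\phi_m$; and (iii) a uniform bound on $\|A^{-1}\|_2$, transferred from the $L^2$-boundedness of the continuous inverse via the $\sqrt{w}$-scaling, as in \cite{bremer3}. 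Steps (ii) and (iii) combine with the conditioning of $U$ in the usual triangle-inequality fashion.

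Two points in your sketch are glossed over, though neither is fatal. First, you invoke the Corollary to say that on a corner panel the true density is ``an $O(t^{N+1})$-perturbation of a finite linear combination of the singular basis functions,'' but the Corollary as stated controls the \emph{residual of the BIE} when a function of the form (\ref{eqn_rhfm}) is substituted, not the density difference $\sigma - \rho$ itself; passing from the residual bound to a bound on $\sigma - \rho$ still requires boundedness of the inverse operator (or the stronger representation results in \cite{serkhacha}). Second, the span claim requires slightly more than the paper's statement about $x^\mu$: the expansion (\ref{eqn_rhfm}) also contains $\sgn(t)|t|^\mu$ terms and, at resonant angles, $|t|^\mu\log|t|$ terms, and one must argue these also lie $\epsilon$-close to the span of $\phi_1,\dots,\phi_K$ (the odd/even pieces reduce to one-sided $t^\mu$ on each half-panel, and the $\log$ terms arise as $\mu$-derivatives and are captured because the $\phi_i$ resolve a whole interval of exponents). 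You correctly flag the genuinely delicate piece, the uniform control of $\|A^{-1}\|_2$ for the corner-adapted basis, and defer it to \cite{bremer3} and \cite{hoskins2019numerical}; this is exactly where the burden of proof lies, and it is also precisely what the paper itself elides by citing rather than proving.
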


\subsection{Discretization of the Neumann problem}
In principle a similar method could be employed to discretize the Neumann boundary integral equations. Unfortunately, the singular nature of the powers (the smallest in the expansion given in Theorem \ref{thm_cord} lies in the range $(-1/2,0)$) makes it difficult to produce universal discretizations and quadratures which work for large ranges of angles. When the above method is run on these problems, discretization nodes tend to accumulate close to the corner (within $10^{-14}$). Apart from posing certain numerical challenges, it also makes the task of finding suitable quadrature formulae difficult. Instead, a different set of discretization nodes and a different set of quadrature nodes can be constructed for each angle, though this would significantly increase the precomputation cost.

Finally, in many applications one already has a discretization of the Dirichlet problem. For example, when considering Laplace transmission problems or triple junction problems one has to solve two decoupled boundary integral equations: one of them a Dirichlet-type boundary integral equation with the diagonal term scaled and the other a Neumann-type boundary integral equation with the identity term scaled (see \cite{hoskins2018numerical} and \cite{hoskins2019solution} for example). In such cases it is convenient to reuse the Dirichlet discretization for the Neumann problem.  



\section{Numerical apparatus \label{sec:napp}}
\subsection{Adjoint discretization}

The following lemma relates the discretization of the inverse of an operator to the adjoint of the discretization of its inverse. Its proof follows directly from the definition of the adjoint and is omitted.
\begin{lemma}\label{lem_adjm}
Suppose $A: L^2([0,L]) \to L^2([0,L])$ is a bounded invertible operator and that $A_\epsilon$ is an operator such that
\begin{align}
\left| \langle f,A^{-1} g \rangle - \langle f,A^{-1}_\epsilon g \rangle \right| \le \epsilon \|f\| \|g\|,
\end{align}
for all $f$ and $g$ in some subspace $S_{\epsilon} \subset L^2([0,L]).$ Here $\langle \cdot, \cdot \rangle$ denotes the inner product on $L^2([0,L])$ and $\| \cdot\|$ denotes the norm for $L^2([0,L]).$ Then, for all functions $f$ and $g$ in $S_{\epsilon}$
\begin{align}
\left| \langle f,(A^{-1})^* g \rangle - \langle f,\left(A^{-1}_\epsilon\right)^* g \rangle \right| \le \epsilon \|f\| \|g\|
\end{align}
where $\,{{}^*}\,$ denotes the adjoint.
\end{lemma}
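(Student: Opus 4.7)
The plan is to reduce the statement about adjoints to the given hypothesis by swapping the roles of $f$ and $g$. Since the space is $L^2([0,L])$ with a real inner product, the defining property of the adjoint gives $\langle f, B^{*} g\rangle = \langle Bf, g\rangle = \langle g, Bf\rangle$ for any bounded operator $B$. I would apply this identity to both $B = A^{-1}$ and $B = A_\epsilon^{-1}$ (the latter is well-defined under the implicit assumption that $A_\epsilon$ is also invertible, or else one should read $A_\epsilon^{-1}$ as whatever operator appears in the hypothesis).

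The key step is then the algebraic rewriting
\begin{align*}
\langle f, (A^{-1})^{*} g\rangle - \langle f, (A_\epsilon^{-1})^{*} g\rangle = \langle g, A^{-1} f\rangle - \langle g, A_\epsilon^{-1} f\rangle.
\end{align*}
Since $f, g \in S_\epsilon$ and $S_\epsilon$ does not care about the labelling, I can invoke the hypothesis with $f$ and $g$ interchanged to bound the right-hand side by $\epsilon \|g\|\|f\| = \epsilon \|f\|\|g\|$. This yields exactly the desired inequality.

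The whole argument is essentially one line once the adjoint identity is written down, so there is no genuine obstacle; the only thing one should be careful about is the symmetry of the hypothesis in its two arguments (which holds because the bound $\epsilon\|f\|\|g\|$ is symmetric and because $S_\epsilon$ is a single subspace containing both $f$ and $g$). If one wanted to be fully rigorous, I would also note that $\langle \cdot,\cdot\rangle$ here is the real $L^2$ pairing used throughout the paper, so no complex conjugation issues arise; in the complex case the same proof works after taking complex conjugates on one side, and the modulus bound is preserved.
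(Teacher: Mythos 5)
Your argument is correct and is precisely the one-line reduction via the defining identity $\langle f, B^{*}g\rangle = \langle g, Bf\rangle$ that the paper has in mind when it says the proof ``follows directly from the definition of the adjoint and is omitted.'' Your caveat about the symmetry of the hypothesis in $f$ and $g$ is the right thing to check and it holds for exactly the reason you give.
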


The following corollary follows immediately from the previous result.
\begin{corollary}
Let $A$ be the $N\times N$ matrix obtained by discretizing the interior Dirichlet problem and $S_\epsilon$ be the collection of functions given by Definition \ref{def:seps}. Then for all functions $f,g \in S_\epsilon$
\begin{align}
\left| \langle {\bf g}, (A^T)^{-1}{\bf f} \rangle - \int_0^L g(t)\,\sigma(t)\,{\rm d}t \right| < \epsilon \|f\|\, \|g\|,
\end{align}
where ${\bf f},{\bf g}$ are the discretizations of $f$ and $g$ scaled by the square roots of the discretization weights, and $\sigma$ is the solution to the exterior Neumann problem with boundary data $f$.
\end{corollary}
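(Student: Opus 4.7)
The plan is to combine Lemma~\ref{lem_adjm} with the Dirichlet accuracy theorem stated at the end of Section~\ref{sec:disc_dir}. The starting observation is that the exterior Neumann integral operator $\cL_N\sigma(s) = \sigma(s)/2 + \int_0^L k(s,t)\sigma(t)\,dt$ from (\ref{bie_extn}) is, up to sign conventions on the identity term, the formal $L^2$-adjoint of the interior Dirichlet operator $\cL_D\sigma(s) = -\sigma(s)/2 + \int_0^L k(t,s)\sigma(t)\,dt$ from (\ref{bie_intd}). Writing $\sigma = \cL_N^{-1} f$ for the Neumann density, this yields the pairing identity
\[
\int_0^L g(t)\sigma(t)\,dt \;=\; \langle g,\cL_N^{-1} f\rangle \;=\; \langle g,(\cL_D^{-1})^{*} f\rangle \;=\; \langle \cL_D^{-1} g,\,f\rangle,
\]
reducing the corollary to showing that $\langle {\bf g},(A^T)^{-1}{\bf f}\rangle$ is an $\epsilon\|f\|\|g\|$-accurate approximation of $\langle \cL_D^{-1} g,f\rangle$.

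First I would invoke the Dirichlet accuracy theorem with boundary data $g\in S_\epsilon$: it produces $\tilde{\sigma}_g$, the interpolant of $A^{-1}{\bf g}$, satisfying $\|\tilde{\sigma}_g - \cL_D^{-1}g\|_{L^2}<\epsilon$. Cauchy--Schwarz then bounds $|\langle f,\tilde{\sigma}_g\rangle - \langle f,\cL_D^{-1}g\rangle|$ by $\epsilon\|f\|$. The next step is to pass from the continuous pairing to the discrete pairing $\langle {\bf f},A^{-1}{\bf g}\rangle = \sum_i f(s_i)\,\tilde{\sigma}_g(s_i)\,w_i$: because both $f$ and $\tilde{\sigma}_g$ lie in $S_\epsilon$ and the $\{w_i\}$ are the quadrature weights tied to that discretization, this sum is an $\epsilon$-accurate quadrature for $\int_0^L f\,\tilde{\sigma}_g\,ds$, uniformly over admissible data. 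Combining the two estimates verifies the hypothesis of Lemma~\ref{lem_adjm} for the operator $\cL_D^{-1}$ and its matrix-based approximation $A^{-1}$.

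Applying Lemma~\ref{lem_adjm} then transfers the approximation to the transpose, giving
\[
\bigl|\langle {\bf g},(A^T)^{-1}{\bf f}\rangle - \langle g,(\cL_D^{-1})^{*} f\rangle\bigr| \;<\; \epsilon\|f\|\|g\|,
\]
and substituting the adjoint identity from the first paragraph closes the argument. The principal obstacle I anticipate lies in the bookkeeping of the second step: one must verify that the $\sqrt{w_i}$ scalings built into the Nystr\"om system interact with the Euclidean inner product so as to recover an $\epsilon$-accurate approximation of the continuous $L^2$ pairing, uniformly for $f,g\in S_\epsilon$. A smaller but essential subtlety is reconciling the sign on the $\tfrac12 I$ term between (\ref{bie_intd}) and (\ref{bie_extn}): the literal transpose of $-\tfrac12 I + K^{*}$ is $-\tfrac12 I + K$, so a sign convention on $A$ (or equivalently on $\sigma$) must be fixed before the identification $(\cL_D^{-1})^{*} f = \sigma$ is literally exact.
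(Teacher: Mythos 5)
Your proposal is correct and follows the same route the paper intends: verify the hypothesis of Lemma~\ref{lem_adjm} for $A^{-1}$ as a weak approximation to $\cL_D^{-1}$ (via the Dirichlet accuracy theorem plus the $\sqrt{w_i}$--weighted quadrature identity $\langle {\bf f}, A^{-1}{\bf g}\rangle = \sum_i w_i f(s_i)\tilde\sigma_g(s_i)$), then invoke the lemma to transfer accuracy to the transpose and identify $(\cL_D^{-1})^* f$ with the exterior Neumann density. The sign caveat you raise at the end is a fair one --- as written, the identity term in (\ref{bie_extn}) carries the opposite sign from what the literal adjoint of (\ref{bie_intd}) produces, so the identification requires fixing a convention --- but this is an exposition issue in the paper rather than a gap in your argument, which the paper itself elides by asserting the corollary follows ``immediately.''
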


Hence a discretization of the Neumann problem can be obtained simply by taking the adjoint of the Dirichlet problem. The resulting density $\sigma$ obtained is accurate in a weak sense, ie. its inner products against functions in $S_\epsilon$ are accurate to within an error of $\epsilon.$

We conclude this section with a few remarks. 

\begin{remark1}
\label{rem:far-field-accuracy}
We observe that if the solution to the boundary value problem is being calculated at a point $\by \in \mathbb{R}^2\setminus \Omega$ more than one panel length away from the boundary curve $\Gamma$ then the Neumann density $\sigma$ obtained using the above result will give an accuracy of $\epsilon,$ ie. the function $K(\by,\gamma(t)) \in S_\epsilon.$ Thus accurate values of the solution in the far-field can be obtained almost immediately.
\end{remark1}
\begin{remark1}
\label{rem:strong-sol-smooth}
Similarly, if the point $\by \in \mathbb{R}^2 \setminus \Omega$ at which the solution to the Neumann boundary value problem is to be calculated lies close to a smooth panel then the density $\sigma$ near that point can be interpolated to a finer set of quadrature points and the value of $u(\by)$ can once again be obtained to precision $\epsilon.$ We note, however, that in general the density in the vicinity of a corner cannot be interpolated accurately. This follows from the fact that the interpolation scheme constructed is only guaranteed to interpolate the powers arising in the Dirichlet problem accurately near the corner. The collection of singular powers arising in Neumann problems contain negative powers which are not contained in this set and hence are not interpolated accurately.
\end{remark1}

\subsection{Weak corner re-solving \label{sec:resolve}}
In this section we address the problem highlighted in the previous one; namely, the accurate evaluation of the solution to the exterior Neumann problem in the vicinity of a corner. Our approach is based on the observation that the potential generated by the density on the boundary outside of a sufficiently small neighborhood of the corner is smooth when evaluated in the vicinity of the corner. This allows us to convert the problem of evaluating the potential near the corner (given the approximation to the density obtained using the adjoint approach described in the previous section) into a purely local one. In particular, we re-discretize only a small neighborhood of the corner which in turn allows us to evaluate the potential arbitrarily close to the corner to within a small factor of machine precision. 

In the following we assume that we are given a discretization of the interior Dirichlet boundary integral equation (\ref{bie_intd}) with nodes $x_1,\dots,x_N$ and corresponding weights $w_1,\dots,w_N.$ In particular, we assume that the discretization nodes are obtained by subdividing the boundary into panels. Those panels which contain a vertex are discretized using a custom discretization scheme (see Section \ref{sec:disc_dir}) while the remaining panels are discretized using a standard smooth quadrature rule (such as Gauss-Legendre or Chebyshev nodes). In the following we assume that an $M$-point Gauss-Legendre quadrature rule is used and the corner panels are discretized using $P$ nodes (together with a collection of orthonormal functions on that interval $\phi_1,\dots,\phi_P$).

Additionally, we denote the discretization of the interior Dirichlet operator (using the custom quadratures described in Section \ref{sec:disc_dir}) by $A.$ Let $\underline{f} = (f_1,\dots,f_N)^T$ where $f_i = f(x_i) \sqrt{w_i}$ and $f: \partial \Omega \to \mathbb{R}$ is the right-hand side of the exterior Neumann problem. Finally, let $\underline{\sigma}$ be the approximation to the density (scaled by the square roots of the weights) obtained by solving the linear system
\begin{align}\label{eqn:adj_linalg}
A^T \underline{\sigma} = \underline{f}.
\end{align}
For notational convenience we let $\gamma:[-\delta,L-\delta] \to \partial \Omega$ be a counterclockwise arclength parametrization of $\partial \Omega$ such that  $\gamma(0)$ corresponds to a vertex and $\gamma[-\delta,\delta]$ corresponds to a corner panel.

For a panel $\gamma([s_1,s_2])$ with discretization nodes $x_i,\dots,x_{i+M}$ corresponding to a Gauss-Legendre panel the density is smooth and thus it is expected to be well-represented in the basis of Legendre polynomials (shifted and scaled to the interval $[s_1,s_2]$). Hence standard interpolation techniques can be used to obtain an accurate approximation to the density $\sigma$ on the interval $s_1 \le s\le s_2$ . Typically we use 16th order Gauss-Legendre panels and choose their sizes so that their length is no more than their distance to the nearest corner. This latter choice guarantees that for any $\epsilon>0$ there exists an $M$ such that if the Gauss-Legendre panels are discretized using an $M$-point Gauss-Legendre rule then the density on that panel can be interpolated to relative precision $\epsilon$ in an $L^2$-sense.
(We discuss a sketch of a proof in~\cref{sec:appb})

For corner panels the nodes were constructed to enable stable interpolation of densities $s^\mu,$ $\mu \in 0 \cup [1/2,50],$ on the interval $s \in (-\delta,\delta)$ - assuming for simplicity that the corner is at $0$ and the panel is of length $2 \delta.$ As mentioned above, the density is expected to contain terms of the form $s^\mu$ for some finite collection of $\mu$ in the interval $(-1/2,1/2),$ and hence will not in general be stably interpolable on the interval $(-\delta,\delta).$ However, it is possible to use the density obtained using (\ref{eqn:adj_linalg}) to construct a sequence of nested problems in the neighborhood of the corner, the solutions of which enable accurate interpolation of the density arbitrarily close to the vertex. The number of these problems depends only on the distance of the closest evaluation point to the corner. In particular, if $r$ is the smallest distance of an evaluation point from the corner then only $\log_2 r/\delta$ levels are required. Each problem involves the solution of a small linear system (typically less than $100 \times 100$) and as such can be performed quickly. Furthermore, we note that the algorithm can be easily parallelized to treat multiple corners concomitantly.

We begin with the following proposition, the proof of which follows immediately from the definition of the kernel $k$ and is omitted.
\begin{proposition}
Suppose that $f$ be a piecewise-analytic function in $S_\epsilon$ and  $\underline{\sigma} = (A^T)^{-1} {\bf f}$ is the approximation to the Neumann density obtained using the adjoint of the discretization for the interior Dirichlet boundary integral equation. Further suppose that the discretization nodes are ordered so that $s_1,\dots,s_P$ correspond to the corner panel associated with the interval $(-\delta,\delta),$ $s_{P+1},\dots,s_{P+M}$ correspond to the Gauss-Legendre panel immediately to the left associated with the interval $(-2\delta,-\delta),$ and $s_{P+M+1},\dots,s_{P+2M}$ to the Gauss-Legendre panel immediately to the right associated with the interval $(\delta,2\delta).$
Then
\begin{align}
h(t) = \sum_{i=P+2M+1}^N k(s_i,t) \sqrt{w_i} \,\underline{\sigma}_i
\end{align}
is an analytic function of $t$ for all $t \in (-2\delta,2\delta).$
\end{proposition}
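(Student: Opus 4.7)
The plan is to exhibit $h(t)$ as a finite sum of compositions of a real-analytic kernel with the piecewise-affine arclength parametrization $\gamma$, and to extract analyticity on each piece.

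First I would fix an index $i \in \{P+2M+1, \ldots, N\}$ and examine $k(s_i,t) = K(\gamma(s_i), \gamma(t))$ as a function of $t$. Because $s_i$ lies outside the three central panels and $\partial\Omega$ is non-self-intersecting near the corner, there exists $d>0$ (obtained as the minimum over the finite index set) such that $|\gamma(s_i) - \gamma(t)| \ge d$ for every $t \in [-2\delta, 2\delta]$ and every $i$ in the sum. Since $K(\bx, \by) = -\frac{1}{2\pi}\, \bnu(\bx) \cdot (\bx - \by)/|\bx - \by|^2$, for each fixed $\bx_0 = \gamma(s_i)$ the map $\by \mapsto K(\bx_0, \by)$ is a rational function of the components of $\by$ whose denominator is bounded below by $d^2$ on the compact set $\gamma([-2\delta,2\delta])$; it therefore admits a holomorphic extension to a complex planar neighborhood of that set.

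Next I would use that the arclength parametrization of a polygon is piecewise affine. Writing $\bq = \gamma(0)$ for the corner vertex and $\bv_{L}, \bv_{R}$ for the unit edge-direction vectors on either side, one has $\gamma(t) = \bq + t\bv_{L}$ on $(-2\delta,0)$ and $\gamma(t) = \bq + t\bv_{R}$ on $(0,2\delta)$ (by construction no other vertex lies in either subinterval, since panel lengths do not exceed the distance to the nearest corner). On each of these subintervals $\gamma(t)$ is entire in $t$, so composition with the holomorphic extension of $K(\bx_0, \cdot)$ produces a real-analytic function of $t$ on that subinterval. Taking a finite linear combination preserves analyticity, which gives the conclusion on each of $(-2\delta,0)$ and $(0,2\delta)$.

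The main obstacle is the point $t = 0$. At the corner, $\gamma$ has a first-order kink ($\bv_{L} \ne \bv_{R}$), so the one-sided derivatives of $k(s_i,t)$ at $t=0$ involve different directional derivatives of $K(\gamma(s_i), \cdot)$ and generically disagree; the obstruction survives taking a finite linear combination with generic weights $\sqrt{w_i}\,\underline{\sigma}_i$. I would therefore read the assertion ``analytic on $(-2\delta, 2\delta)$'' as piecewise analyticity on the two open subintervals meeting at the corner, which is the structural feature actually exploited by the resolving scheme of~\cref{sec:resolve}: that algorithm treats each side of the corner independently and only needs analyticity of the contribution from far-away nodes separately on each edge.
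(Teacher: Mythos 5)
Your argument is correct, and since the paper omits the proof entirely (stating only that it ``follows immediately from the definition of the kernel $k$''), yours is essentially the canonical argument: $h(t)=\tilde h(\gamma(t))$ where $\tilde h(\by)=\sum_i c_i\,\bnu(\gamma(s_i))\cdot\nabla_{\bx}G(\gamma(s_i),\by)$ is real-analytic (indeed harmonic) on a neighborhood of $\gamma([-2\delta,2\delta])$ because the finitely many source points $\gamma(s_i)$ are bounded away from that compact set, and $\gamma$ is affine on each leg of the wedge. Your caveat about $t=0$ is also well taken and is a genuine (if minor) correction to the statement as written: already for $\tilde h$ linear one has $h'(0^{\pm})=\nabla\tilde h(\bq)\cdot\gamma'(0^{\pm})$ with $\gamma'(0^{-})\neq\gamma'(0^{+})$, so $h$ generically fails to be differentiable at the vertex, and only piecewise analyticity (with one-sided analytic extensions past $0$) can hold. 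This weaker conclusion is exactly what the paper uses downstream --- equation (\ref{eqn:resolve_diri0}) is described as having a ``piecewise analytic right-hand side,'' and the interpolation basis $\phi_1,\dots,\phi_P$ on the corner panel contains the functions ${\rm sgn}(s)|s|^{\mu}$ needed to represent functions that are analytic on each side of the vertex separately --- so nothing in the subsequent development is affected.
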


In light of this we consider the following integral equation
\begin{align}\label{eqn:resolve_neum0}
-\sigma(s) + \int_{-2\delta}^{2\delta} k(s,t) \sigma(t)\,{\rm d}t = f(s)-h(s), \quad -2\delta \le s \le 2\delta.
\end{align}
We note that the solution to (\ref{eqn:resolve_neum0}) is equal to the solution of the original boundary integral equation (\ref{bie_extn}) restricted to the interval $[-2\delta,2\delta].$ Taking the adjoint of (\ref{eqn:resolve_neum0}) we obtain
\begin{align}\label{eqn:resolve_diri0}
-\sigma(s) + \int_{-2\delta}^{2\delta} k(t,s) \sigma(t)\,{\rm d}t = f(s)-h(s), \quad -2\delta \le s \le 2\delta.
\end{align}
which is a Dirichlet boundary integral equation for a wedge with a piecewise analytic right-hand side. In particular, we can discretize the operator using the method summarized in the previous section. Specifically, we subdivide the interval $[-\delta,\delta]$ into three subintervals $I_0 = [-\delta,\delta/2],$ $L_0 = [-\delta/2,\delta/2],$ and $J_0 = [\delta/2,\delta].$ On $I_0$ and $J_0$ we place standard Gauss-Legendre discretization nodes, while on $L_0$ we use the custom discretization scheme for corners, outlined in Section \ref{sec:disc_dir} (see \cite{} for a detailed description of the method). On the intervals $[-2\delta,-\delta]$ and $[\delta,2\delta]$ we use the same discretization nodes and weights as in the original system for those intervals (we call these panels $K_0$ and $Q_0$ respectively). Let $\underline{f}_0$ denote the right-hand side of (\ref{eqn:resolve_neum0}) evaluated at these discretization nodes and scaled by the square roots of the corresponding weights. Let $A_0$ be the discretization of the interior Dirichlet problem operator (ie. the operator acting on $\sigma$ on the left-hand side of (\ref{eqn:resolve_diri0})). We note that due to the scale invariance of Laplace's equation for polygonal domains the portion of $A_0$ corresponding to the self-interaction of $L_0$ is a submatrix of the original matrix $A.$ All other blocks can be generated using the discretization nodes as quadrature nodes.

The analysis of the previous section then shows that if $\underline{\sigma}_0$ is the solution of the equation
\begin{align}
A_0^T \underline{\sigma}_0 = \underline{f}_0
\end{align}
then $\underline{\sigma}_0$ gives a {weak solution} to the integral equation (\ref{eqn:resolve_neum0}), i.e. for any function $g$ which is analytic on $[-2\delta,0]$ and $[0,2\delta]$ the inner product $\langle g,\sigma \rangle$ can be calculated to precision $\epsilon$ using the solution $\underline{\sigma}_0.$ Moreover, since the true density $\sigma$ is smooth on $[\delta,2\delta]$ and $[-2\delta,-\delta]$ the Gauss-Legendre discretization allows accurate interpolation of the density on those regions.

\begin{remark}
Though the above method produces a viable method for reducing the problem, as written the reduction is non-local --- in order to compute the right-hand side for the sub-problem one must evaluate contributions from the rest of the domain.
\end{remark}

The following theorem shows that the right-hand side $\underline{f}_0$ can be computed only using local data (i.e. values of the weak solution in the vicinity of the corner).\\
\begin{theorem}
Suppose that $\underline{f}_0$ is the discretization of the right-hand side of (\ref{eqn:resolve_diri0}) corresponding to nodes $s^0_1,\dots,s^0_{N_0}.$ Further suppose that $U$ is the $P\times P$ matrix with entries $U_{ij} = \phi_i(s_j) \sqrt{w_j},$ where $\phi_1,\dots,\phi_P$ are the orthonormal functions on $(-\delta,\delta)$ spanning $,{\rm sgn}(s)|s|^\mu, |s|^\mu,$ $\mu=0,1/2-40$ on that interval. Let $w:(-\delta,\delta) \to \mathbb{R}^P$ be the vector-valued function defined by
\begin{align}
w(t) = (\phi_1(t),\phi_2(t),\dots,\phi_P(t)),
\end{align}
and $\tilde{\sigma} = (\sigma_1,\dots,\sigma_P)^T$ be the approximation to the solution in the vicinity of the corner obtained by solving the original system (\ref{eqn:adj_linalg}). Then $|(f(t)-h(t)) - w(t) U^{-1} A_0^T \tilde{\sigma}| = O(\epsilon)$ for all $t \in(-\delta,\delta).$ In particular, if $\tilde{f}_0$ is the vector of length $N_0$ with entries defined by
\begin{align}
(\tilde{f}_0)_i = w(s^0_i) U^{-1} A_0^T \tilde{\sigma}, \quad i=1,\dots,N_0,
\end{align}
then $\| \underline{f}_0 - \tilde{f}_0\| = O(\epsilon).$
\end{theorem}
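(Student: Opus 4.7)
The plan is to split the claim into two pieces: an interpolation statement about $f-h$, and an algebraic identification of $A_0^T\tilde{\sigma}$ with the scaled nodal values of $f-h$ at the original corner nodes. First, note that the proposition immediately preceding the theorem shows $h$ to be analytic on $(-2\delta,2\delta)$, and by hypothesis $f$ is piecewise analytic, so $f-h$ is piecewise analytic on $(-\delta,\delta)$. Because the basis $\phi_1,\dots,\phi_P$ was designed to span the singular powers $|s|^\mu$ and $\sgn(s)|s|^\mu$ for $\mu\in\{0\}\cup[1/2,50]$ to precision $\epsilon$, it spans piecewise-analytic functions on $(-\delta,\delta)$ to that precision as well. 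Hence $f-h\in S_\epsilon$, and standard interpolation with the matrix $U$ gives the pointwise bound
\[
\Bigl|(f-h)(t)-w(t)U^{-1}\bigl(\sqrt{w_j}(f-h)(s_j)\bigr)_{j=1}^P\Bigr|=O(\epsilon)
\]
for $t\in(-\delta,\delta)$.

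Next, I would identify $(A_0^T\tilde{\sigma})_j$ with $\sqrt{w_j}(f-h)(s_j)$ by unwinding the original adjoint equation $A^T\underline{\sigma}=\underline{f}$ restricted to the corner rows $j\le P$. Splitting the column sum into the corner indices ($1\le i\le P$), the two adjacent Gauss-Legendre panel indices ($P<i\le P+2M$), and the far indices ($i>P+2M$) recognizes the third group as $\sqrt{w_j}\,h(s_j)$ by the very definition of $h$. Scale invariance of the Laplace kernel on a wedge identifies the $P\times P$ self-interaction block of $L_0$ in $A_0$ with the corner block of the original $A$, so when the near-panel contributions are re-expressed in terms of the reduced-problem quadrature (which in the paper's notation is folded into the symbol $A_0^T\tilde{\sigma}$), the surviving corner and near terms combine to the reduced operator applied to the density. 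The reduced boundary integral equation (\ref{eqn:resolve_diri0}) then identifies this quantity with $\sqrt{w_j}(f-h)(s_j)$ up to the error committed by replacing $\underline{\sigma}$ with $\tilde{\sigma}$.

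The main obstacle is controlling that replacement error, since $\tilde{\sigma}$ is only an $S_\epsilon$-weak approximation to $\sigma$ (by the corollary following Lemma \ref{lem_adjm}): it guarantees accurate inner products against functions in $S_\epsilon$, not accurate pointwise values near the corner. The key technical step is therefore to verify that each relevant row of $A_0^T$ acts on $\tilde{\sigma}$ as an inner product against an $S_\epsilon$ test function. For the corner self-interaction block this is built into the generalized Gaussian quadratures from \cref{sec:disc_dir}, which were constructed precisely to handle integration against the singular basis; for the Gauss-Legendre neighbor blocks the kernel $k(s_j,\cdot)$ is analytic away from the corner and belongs to $S_\epsilon$ directly. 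Applying Lemma \ref{lem_adjm} row-by-row then yields $\|A_0^T\tilde{\sigma}-\bigl(\sqrt{w_j}(f-h)(s_j)\bigr)_{j=1}^P\|=O(\epsilon)$, which combined with the interpolation bound of the first paragraph gives the pointwise claim and, evaluating at each new node $s_i^0$, the norm estimate $\|\underline{f}_0-\tilde{f}_0\|=O(\epsilon)$.
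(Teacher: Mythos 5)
The interpolability of $f-h$ is correctly handled in your first paragraph and matches the paper's opening step. After that, however, your proof diverges from the paper's argument in a way that introduces a genuine gap, and the paper's proof is both simpler and cleaner.

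The key observation you miss is that in this theorem $A_0$ is a \emph{submatrix of the original matrix $A$} (the block corresponding to the corner panel and its two neighbors), and $\tilde{\sigma}$ is the corresponding block of the vector $\underline{\sigma}$ satisfying $A^T\underline{\sigma}=\underline{f}$ \emph{exactly}, by definition of $\underline{\sigma}$. Partitioning the rows and columns of this exact linear-algebraic identity into near (indices $1,\dots,P+2M$) and far blocks gives, exactly,
\begin{equation*}
(A_0^T\tilde{\sigma})_j = f_j - \sum_{i>P+2M} A_{ij}\,\underline{\sigma}_i,
\end{equation*}
and the second term on the right is \emph{by the very definition of $h$ in the preceding proposition} (a finite sum over far nodes, not a continuous integral) equal to $\sqrt{w_j}\,h(s_j)$. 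Hence $A_0^T\tilde{\sigma}$ coincides exactly with the scaled nodal values of $f-h$, and the only error in the theorem is the interpolation error from your first paragraph. There is no quadrature error, no discretization error for the local operator, and no ``replacement error'' of $\underline{\sigma}$ by $\tilde{\sigma}$.

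Your second and third paragraphs instead treat $A_0^T\tilde{\sigma}$ as a quadrature approximation to the continuous operator applied to the continuous density, and then worry about the fact that $\tilde{\sigma}$ is only a weak approximation to the true density near the corner. This concern is spurious given the exact identity, but worse, your proposed remedy does not close the gap. Applying \cref{lem_adjm} ``row-by-row'' requires each row of $A_0^T$ to be the discretization of an $S_\epsilon$ function; for the corner self-interaction rows the relevant function is $t\mapsto k(s_j,t)$ with $s_j$ on the corner panel, which is singular at $t=s_j$ and is certainly not in $S_\epsilon$ restricted to the corner panel. Your appeal to the generalized Gaussian quadratures does not rescue this: those quadratures were constructed to integrate $k(s_j,\cdot)$ against the \emph{Dirichlet} singular powers $|t|^\mu$, $\mu\in\{0\}\cup[1/2,50]$, whereas $\tilde{\sigma}$ here carries the Neumann singular powers, which include negative exponents. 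So the step you flag as ``the key technical step'' is not justified by what you cite, and under your framing it would actually fail. The paper avoids all of this by never treating $A_0^T\tilde{\sigma}$ as an approximation to a continuous quantity.

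A secondary source of confusion worth flagging: the paper unfortunately reuses the symbol $A_0$ for two different matrices (the re-discretized local operator in \cref{sec:resolve}, and, in the theorem and its proof, a submatrix of the original $A$). Your invocation of ``scale invariance of the Laplace kernel on a wedge'' to relate the re-discretized corner block to a submatrix of $A$ suggests you read $A_0$ as the former; the proof only needs the latter, and scale invariance plays no role here.
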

\begin{proof}
We begin by observing that both $f$ and $h$ are analytic on the interval $(-\delta,\delta).$ In particular, they can be accurately interpolated using $\phi_1,\dots,\phi_P$ on the interval $(-\delta,\delta).$ Hence,
\begin{align}\label{eqn:approx_thm}
f(t)-h(t) \approx w(t) U^{-1} (f(s_1)-h(s_1),\dots,f(s_P)-h(s_P))^T.
\end{align}
A similar argument shows that $f(t)-h(t)$ is interpolable on $[-2\delta,-\delta]$ and $[\delta,2\delta].$
On the other hand, by construction,
\begin{align}\label{eqn:linalg_thm}
\underline{f} = A^T \begin{pmatrix}\tilde{\sigma}\\ \sigma_{P+{2M+1}}\\ \vdots\\ \sigma_N\end{pmatrix}.
\end{align}
Let $A_0$ be the $(P+2M)\times (P+2M)$ submatrix of $A$ corresponding to the first $P+2M$ rows and columns of $A,$ and $A_{\rm red}$ be the $P+2M \times (N-P-2M)$  submatrix of $A$ corresponding to selecting the first $P+2M$ rows of $A$ and all but the first $P+2M$ columns of $A.$ Using this notation, the first $P+2M$ rows of (\ref{eqn:linalg_thm}) can be re-written as
\begin{align}
\begin{pmatrix}f_1\\\vdots \\ f_{P+2M}\end{pmatrix}  = A_{\rm red}^T \begin{pmatrix} \sigma_{P+2M+1}\\ \vdots\\ \sigma_N\end{pmatrix} + A_0^T \tilde{\sigma}.
\end{align}
The first term on the right-hand side is $(h(s_1),\dots,h(s_{P+2M}))^T.$ Substituting this into the previous equation, we obtain
\begin{align}
(f(s_1)-h(s_1),\dots,f(s_{P+2M})-h(s_{P+2M}))^T = A_0^T \tilde{\sigma}.
\end{align}
The result follows by substituting the above equality into (\ref{eqn:approx_thm}).
\end{proof}

This can be iterated to obtain an interpolable approximation to the density on $L_0 = [-\delta,\delta].$ In particular, we consider the restriction of the exterior Neumann integral equation, as well as its, adjoint to the interval $I_0$ and $J_0.$ For the right-hand side we use the original right-hand side $f$ minus the contribution from the remainder of the domain. In particular, if we define
\begin{align}
h_1(s) = \sum_{x_i \in K_0,Q_0} k(s,x_i) \sigma_0^{(i)} \sqrt{w_i}
\end{align}
then $\sigma$ restricted to the interval $I_0\cup L_0 \cup J_0$ satisfies
\begin{align}\label{eqn:resolve_neum1}
-\sigma(s) + \int_{-\delta}^{\delta} k(s,t) \sigma(t)\,{\rm d} t = f(s)-h(s)-h_1(s), \quad -\delta \le s \le \delta.
\end{align}
The corresponding adjoint equation is given by 
\begin{align}\label{eqn:resolve_diri1}
-\sigma(s) + \int_{-\delta}^{\delta} k(t,s) \sigma(t)\,{\rm d} t = f(s)-h(s)-h_1(s), \quad -\delta \le s \le \delta.
\end{align}

Once again, we divide $L_0$ into three intervals $I_1,$ $L_1,$ and $J_1$ and discretize each interval as before. After solving the corresponding discretization of (\ref{eqn:resolve_neum1}) using the adjoint of the discretization of the integral operator appearing in (\ref{eqn:resolve_diri1}) we obtain a weak solution of $\sigma$ on the interval $I_0\cup L_0 \cup J_0$ which can be interpolated on $I_0,$ and $J_0$ to within precision $\epsilon.$

This process can be repeated an arbitrary number of times to yield a sequence of solutions $\underline{\sigma}_j,$ $j=0,1,2,\dots$ together with corresponding intervals $I_0,I_1,\dots$ and $J_0,J_1,\dots$ on which it can be interpolated.

Note that if $\bx$ is a point a distance $r$ away from the corner then after $J=1+\log_2 r/d$ such subdivisions $\bx$ will be at least twice the corner panel length away from the corner. Thus $K(\bx,\cdot)$ will be smooth when restricted to the corner panel $[-\delta/2^{J},\delta/2^J]$ and hence will be integrated accurately using the corner panel discretization nodes and weights.



\section{Numerical results \label{sec:num}}
\subsection{Accuracy}
In this section, we demonstrate the accuracy of the proposed numerical method (both in the {\it weak} sense 
described above, 
as well as in the classical sense after sufficiently many re-solves) on the triangular domain shown below. 
The reference solution for each of the examples is computed using a discretization with
a graded mesh in the vicinity of the corners, where the smallest panel at the corner is $2^{-200}$ times
the length of the first macroscopic panel away from the corner (see~\cref{fig:dom}). In these examples, the solutions are computed via dense linear solves.

\begin{figure}
\begin{center}
\includegraphics[width=0.7\linewidth]{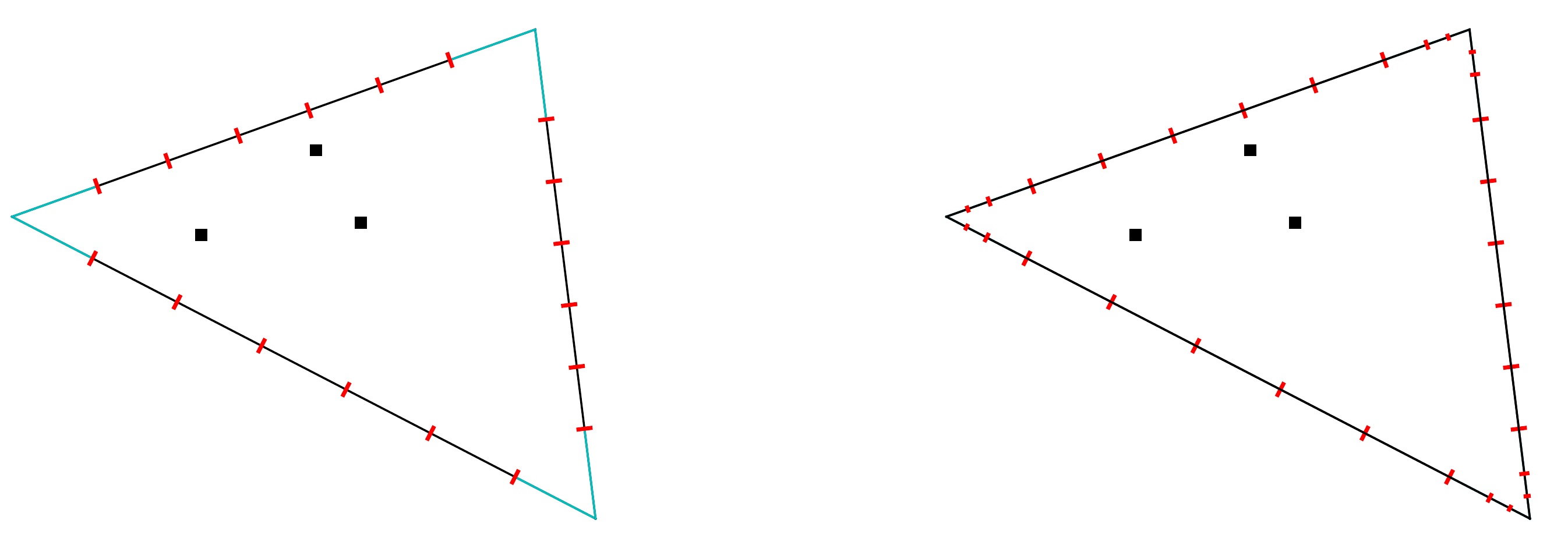}
\caption{Problem domain and panel discretization of the boundary. The discretization on the left is based on using the Dirichlet
discretization at corner panels (indicated by blue panels) discussed in~\cref{sec:disc_dir}, while the discretization on the right is a sample discretization with 2 levels of refinement in the vicinity of the corner. All the panels in black are discretized using scaled Gauss-Legendre nodes. The square ticks indicate location of the charges $\bx_{j}$ for defining the boundary data for the scattering problem.}
\label{fig:dom}
\end{center}
\end{figure}

\begin{rem}
Though $2^{-200}$ is significantly smaller than machine precision, the matrix entries corresponding to the
corner interactions can be computed accurately by translating the corners to the origin when 
computing interactions of nearby points.
\end{rem} 
\begin{rem}
Simple arguments from complex analysis show that when using graded meshes, in order to obtain full 
machine precision 
($\sim 1.11 \times 10^{-16}$) for solutions of the Neumann problem at any point in the interior 
at least $10^{-16}$ away from a corner, it suffices to choose the smallest panel (i.e. the size of the 
panel closest to the corner) to be of size $2^{-100}.$ However, resulting values of the density will not be 
accurate to machine precision at all nodes. In fact the quality
of the density deteriorates as one approaches the corner. Thus, in order to obtain accurate point values
of the density to machine precision at all points which are at least $2^{-100}$ away from the corner, we use a smallest
panel size of $2^{-200}$. 
\end{rem}

The potential at target locations which are sufficiently far from the boundary (i.e. at least one panel length away from 
every panel) is the inner product of the 
density with a smooth function and hence can be computed accurately without re-solving (see~\cref{rem:far-field-accuracy}). For a target 
location $\boldsymbol{y}$, we compute the potential via the formula,
\begin{equation}
\label{eq:farfieldpot}
u(\by) = \int_{\Gamma} G(\bx,\by) \sigma(\bx) dS_{\bx} \approx \sum_{i=1}^{N}  G(\gamma(s_{i}),\by) \sigma_{i} \sqrt{w_{i}}
\end{equation}
In~\cref{fig:scatteringtest}, we compute the error in the solution at target
locations for a scattering problem whose right hand side is given by a collection of three interior charges
\begin{equation}
\label{eq:scatboundarydata}
f(\bx) = -\nabla  \left(\sum_{j=1}^{3} \log |\bx - \bx_{j}|  \right)  \cdot \bnu(\bx)\, ,  
\end{equation}
where the locations $\bx_{j}$ are denoted by square dots in~\cref{fig:scatteringtest}. Note that the density $\sigma$ plotted
as a function of arclength goes to infinity at the corner vertices, indicating that the native Dirichlet discretization 
presented in~\cref{sec:disc_dir} wouldn't have sufficed. However, the potential in the volume is accurate to 14 digits at
target locations away from the boundary. 
\begin{figure}
\begin{center}
\includegraphics[width=\linewidth]{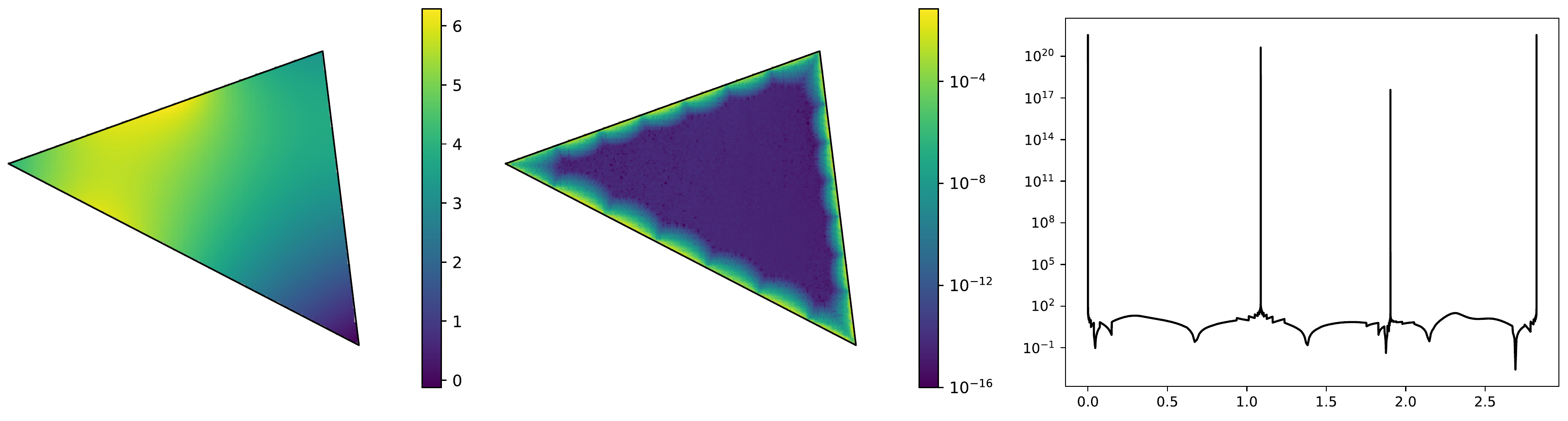}
\caption{Left panel: Solution to Neumann problem with data given by~\cref{eq:scatboundarydata}, center panel: error in computing the potential in the formula using the underlying smooth quadrature~\cref{eq:farfieldpot}, and on the right the density $\sigma$ as a function of arc-length}.
\label{fig:scatteringtest}
\end{center}
\end{figure}

Another example of a ``weak quantity'' is the polarization tensor associated with a domain. This requires the solution of the
exterior problems with boundary data $f_{1} = \bnu_{1}$ or $f_{2} = \bnu_{2}$. Let $\sigma_{1}$ and $\sigma_{2}$ denote
the corresponding solutions. The polarization tensor can be expressed in terms of the solutions $\sigma_{1}$ and $\sigma_{2}$
as
\begin{equation}
P = \begin{bmatrix}
\int_{\Gamma} x_{1} \sigma_{1}(\bx) dS_{\bx} & \int_{\Gamma} x_{2} \sigma_{1}(\bx) dS_{\bx} \\
\int_{\Gamma} x_{1} \sigma_{2}(\bx) dS_{\bx} & \int_{\Gamma} x_{2} \sigma_{2}(\bx) dS_{\bx} 
\end{bmatrix}
\end{equation}
The polarization tensor as computed by the reference solution, and the error in computation using the adjoint discretization
are given by
\begin{equation}
P = \begin{bmatrix}
-0.823641009939200 & -0.139714174784448 \\
 -0.139714174784448 &  -1.1421444446470226
\end{bmatrix} \, , \quad \text{Error} = 
\begin{bmatrix} 
2.3 \times 10^{-15} & 7.9 \times 10^{-16} \\
1.3 \times. 10^{-14} & 2.7 \times 10^{-15}
\end{bmatrix}
\end{equation}

In order to demonstrate the accuracy of the corner re-solving approach in obtaining the true density at the corner panels, we  apply the procedure
discussed in~\cref{sec:resolve} iteratively, and compare the obtained density with the reference density after 20,40,60, and
80 iterations of resolves in the vicinity of one of the corners. The reference density and the errors are shown in ~\cref{fig:dens-error}. Furthermore, to highlight the need for special purpose discretizations in the vicinity of corners in the adjoint discretization, we also compare the solution computed using a graded mesh in the vicinity of corners, where the size of the smallest panels for both discretizations are equal. 

\begin{figure}
\begin{center}
\includegraphics[width=\linewidth]{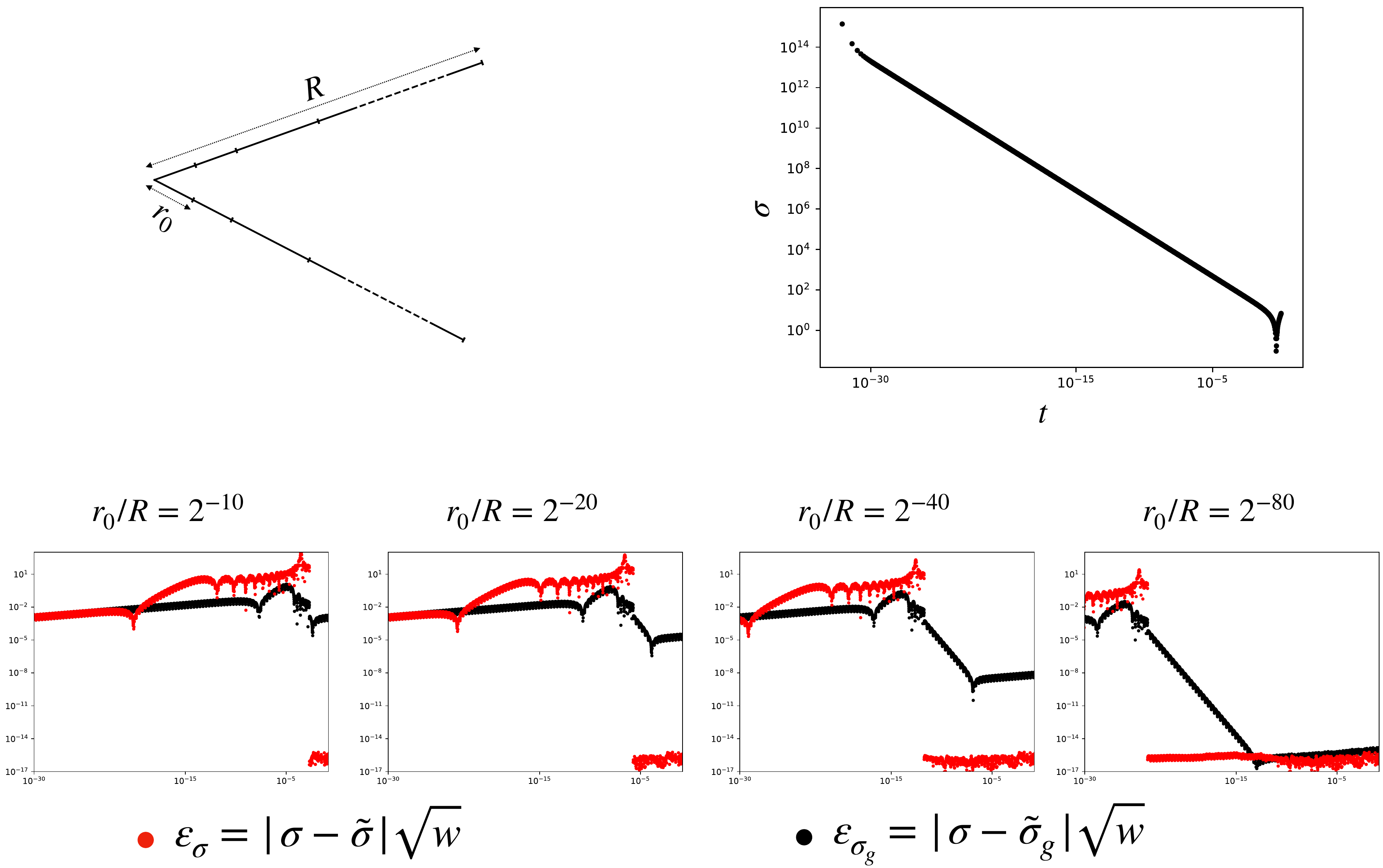}
\caption{Top row: (left) Illustrative mesh used for iteratively computing the solution in the vicinity of a corner, (right) the density in the vicinity of one of the corner panels. Bottom row: error in computing the density, where $\tilde{\sigma}$ denotes the density computed using special purpose discretizations at corner panels, and $\tilde{\sigma_{g}}$ denotes the density using a graded mesh with the smallest panel equal to the length of the smallest panel after the iterative resolve procedure. The errors are scaled by square roots of the quadrature weights.    }.
\label{fig:dens-error}
\end{center}
\end{figure}

After re-solving the density, the solution is evaluated on a tensor product polar grid, where the grid
is exponentially spaced in the radial direction and equispaced in the angular direction. For evaluation points (targets)
close to panels which are not at the corner, we use adaptive integration in order to resolve the near-singular behavior of
the kernel for accurate computation of the integrals. For target locations close to the corner panel, since we do not have
the capability to interpolate the density, we use the underlying smooth quadrature rules for computing their contribution. 
The reference solution and the errors are demonstrated in~\cref{fig:vol-plot}.
\begin{figure}
\begin{center}
\includegraphics[width=\linewidth]{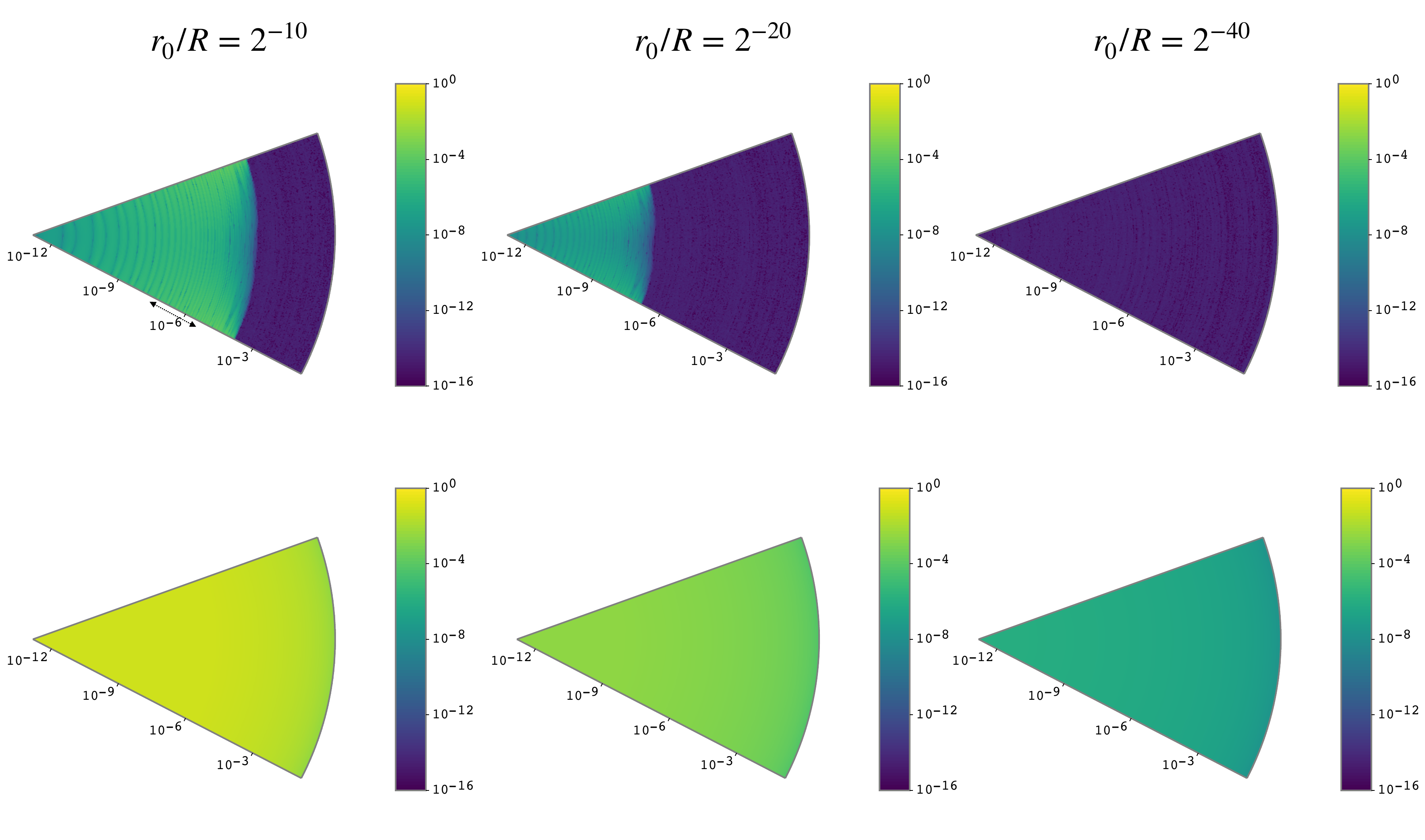}
\caption{Top row: Solution on the volume after 10, 20, and 40 iterations of re-solve. The solution is computed on a tensor product polar grid, where the evaluation points (targets) are exponentially spaced in the radial direction. The closest target location is approximately $10^{-13}$ away from the corner. Near quadrature is handled via adaptive integration except for the corner panel where the smooth quadrature weights are used. Bottom row: analogous results where the solution is computed using a graded mesh.   }.
\label{fig:vol-plot}
\end{center}
\end{figure}

\subsection{Performance}
In this section, we demonstrate the performance of the solver by solving a scattering problem in the
exterior of a ``broken wheel'' region. The boundary data is given by
\begin{equation}
f(x) = \nabla  \sum_{j=1}^{57} c_{j} \log| \bx - \bx_{j}| \cdot \bnu(\bx) \, , 
\end{equation}
where there is one $\bx_{j}$ located in each of the spokes, one of the $\bx_{j}$ is in the central disc, and the remaining $50$ $\bx_{j}$ are chosen randomly in the exterior of the bounding disc containing the domain. The strengths $c_{j}$ are chosen such that they average to $0$. The domain contains $108$ corners, was discretized using $22240$ nodes and required $105$ iterations to converge to a residue of $10^{-15}$. The matrix at each iteration was applied using an FMM whose tolerance was also set to $10^{-15}$. The solution was computed in $15$ secs, and plotted at a $500\times 500$ grid of targets in $6.5$ secs. All of the results have been computed on a single core on a Macintosh machine with Intel core i5 2.3GHz processors. In~\cref{fig:magnetron}, we plot the scattered field, the boundary data, and the computed density.

\begin{figure}
\begin{center}
\includegraphics[width=\linewidth]{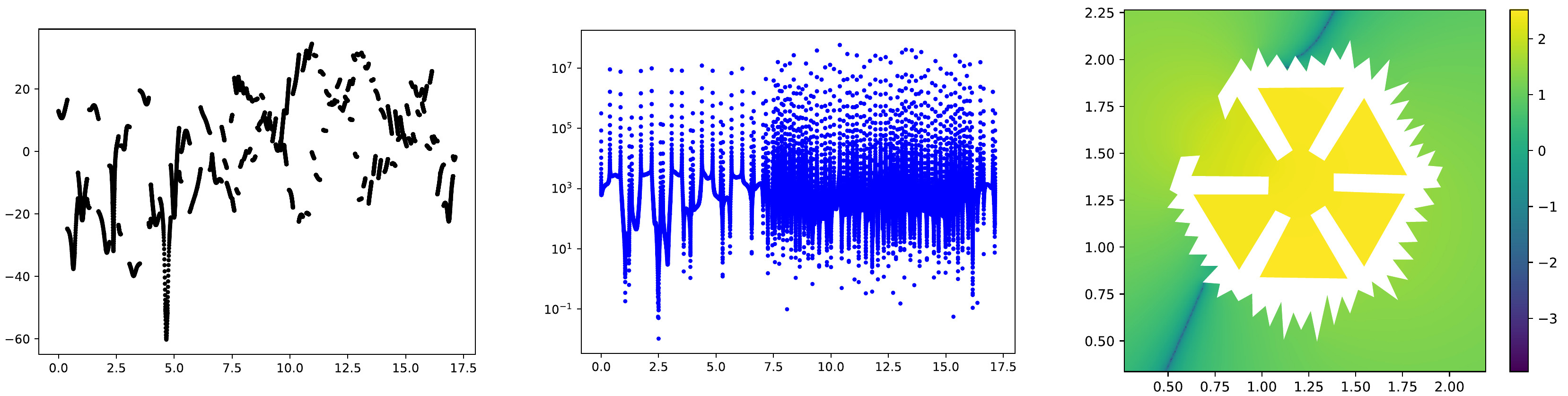}
\caption{(left): Boundary data as a function of arclength, (center): absolute value of density as a function of arclength, and 
(right): $\log_{10}$ of the absolute value of the solution in the volume computed using an FMM.}
\label{fig:magnetron}
\end{center}
\end{figure}



\section{Conclusion and future work}

In this paper we described a method for obtaining solutions to Laplace's equation with Neumann boundary conditions 
on polygonal domains given an accurate discretization of a corresponding Dirichlet problem. 
The resulting solutions are accurate in a 
``weak sense'', allowing evaluation of the solution at points which are located sufficiently far from the boundary of the 
domain. We then presented a method for using these ``weak solutions'' to obtain accurate solutions to the Neumann problem 
in an $L^\infty$-sense arbitrarily close to the corner in a computationally efficient manner. 

Though the present paper treats only Laplace's equation for polygonal domains, the method shown here extends 
much more broadly. In particular, the approach easily extends to accommodate curved boundaries. Moreover, in addition 
to Laplace's equation, this approach can be easily adapted to solve the Helmholtz equation and the biharmonic equation with analogous boundary conditions for which the nature of singularities of corresponding integral equations have been analyzed~\cite{serkhrachh,serkhpnas}. A manuscript detailing this extension is currently in preparation.


\section{Acknowledgments}
J. Hoskins was supported in part by AFOSR FA9550-16-1-0175 and by the ONR (award no. N00014-14-1-0797).
The authors would like to thank Alex Barnett, Leslie Greengard, Michael O'Neil, and Vladimir Rokhlin for many useful discussions, and Jeremy Magland for providing sector plotting tools.

\appendix
\section{Approximation of data on corner panels for re-solve}
Here we give explicit bounds for the rate of convergence of the contribution of the rest of the boundary to a corner panel. In particular, given a polygonal domain with boundary $\Gamma,$ let $\bx$ denote a vertex of $\Gamma$ and $C = \Gamma \cap B_r(\bx),$ where $B_r(\bx)$ is the ball of radius of $r$ centered at $\bx.$ We choose $r$ so that $\Gamma \cap B_{2r}(\bx)$ corresponds to a wedge with internal angle $\pi \alpha$ and side lengths $2r.$ 


\begin{theorem}
Let $\Gamma$ be the boundary of a polygon and $\bx$ be a vertex. Let $r>0$ be a real number such that $\Gamma \cap B_{2r}(\bx)$ corresponds to a wedge with internal angle $\pi \alpha$ and side lengths $2r,$ where $B_R(\bx)$ denotes a ball of radius $R$ centered at $\bx.$ Let $L$ denote the length of $\Gamma$ and $\gamma: [-L/2,L/2]\to \Gamma$ be an arclength counterclockwise parameterization of $\Gamma$ such that $\gamma(0) = \bx.$ Finally, for any $f \in L^2(\Gamma),$ let $H:[0,r] \to \mathbb{R}$ be the function defined by
\begin{align}
H(t) = \int_{\Gamma\backslash B_{2r}(\bx)} K(\gamma(t),\gamma(s)) f(s)\,{\rm d}s.
\end{align}
Then $H$ is analytic in a neighborhood of $0$ with Taylor series coefficients $\{ a_n\}$ satisfying
\begin{align}
|a_n| \le \frac{\sqrt{L}}{2^nr^{n+1}} \|f\|_{L_2(\Gamma \backslash B_{2r}(\bx))}.
\end{align}
\end{theorem}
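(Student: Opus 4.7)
The plan is to exploit the fact that on the side of the wedge emanating from $\bx$, the arclength parametrization is affine. Identifying $\mathbb{R}^{2}$ with $\mathbb{C}$, I would write $\gamma(t) = \bx + t\,\bv$ for $t \in [0,r]$ with $\bv$ a unit complex number, and note that $\bnu(\gamma(t))$ is constant along the side; write its complex avatar as $\nu$. Using that $\log|\bx - \by| = \mathrm{Re}\log(\bx - \by)$ is the real part of a holomorphic function, a direct check gives
\begin{align*}
K(\gamma(t),\gamma(s)) \;=\; -\frac{1}{2\pi}\,\mathrm{Re}\!\left(\frac{\nu}{\bx + t\bv - \gamma(s)}\right),
\end{align*}
so that $H(t) = \mathrm{Re}\,\tilde H(t)$, where
\begin{align*}
\tilde H(z) \;=\; -\frac{1}{2\pi}\int_{\Gamma\setminus B_{2r}(\bx)} \frac{\nu\,f(s)}{\bx + z\bv - \gamma(s)}\,ds.
\end{align*}

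Next, I would verify that $\tilde H$ is holomorphic on the complex disk $\{|z| < 2r\}$. Since $\gamma(s)\notin B_{2r}(\bx)$ forces $|\gamma(s)-\bx|\ge 2r$ while $|z\bv| = |z| < 2r$, the integrand has no singularity, and differentiation under the integral (justified by a dominated-convergence argument with the uniform bound $|\bx + z\bv - \gamma(s)| \ge 2r - |z|$ on any compact subdisk) shows $\tilde H$ is holomorphic there. Consequently $H$ is real-analytic on $(-2r,2r)$, and its Taylor coefficients $a_n$ at $0$ are the real parts of those of $\tilde H$.

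To extract the coefficients explicitly I would expand the Cauchy kernel as a geometric series,
\begin{align*}
\frac{1}{\bx + z\bv - \gamma(s)} \;=\; -\sum_{n=0}^{\infty} \frac{(z\bv)^{n}}{(\gamma(s)-\bx)^{n+1}},
\end{align*}
which converges absolutely and uniformly on compacta inside $\{|z|<2r\}$, and hence may be swapped with the integral in $s$. This yields
\begin{align*}
a_{n} \;=\; \mathrm{Re}\!\left[\frac{\nu\,\bv^{n}}{2\pi}\int_{\Gamma\setminus B_{2r}(\bx)} \frac{f(s)}{(\gamma(s)-\bx)^{n+1}}\,ds\right].
\end{align*}
Applying the Cauchy-Schwarz inequality and using $|\gamma(s)-\bx|\ge 2r$ together with $|\Gamma \setminus B_{2r}(\bx)| \le L$ then gives
\begin{align*}
|a_{n}| \;\le\; \frac{1}{2\pi}\,\|f\|_{L^{2}(\Gamma\setminus B_{2r}(\bx))} \left(\int_{\Gamma\setminus B_{2r}(\bx)} \frac{ds}{(2r)^{2(n+1)}}\right)^{\!1/2} \;\le\; \frac{\sqrt{L}}{2\pi\,(2r)^{n+1}}\,\|f\|_{L^{2}},
\end{align*}
which implies the stated bound (with an extra factor of $4\pi$ to spare).

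The main obstacle is really only the complex bookkeeping in the opening step: one must carefully match the sign conventions for $\bnu\cdot\nabla_{\bx}G$ against the Cauchy-Riemann relations for $\log(\bx - \by)$, so that $K$ genuinely appears as the real part of $-\nu/(2\pi(\gamma(t)-\gamma(s)))$. Once that identification is in hand, everything else is a standard application of holomorphic-function theory to a parameter-dependent contour integral. An essentially identical argument handles the reflected side $t \in [-r,0]$ with a possibly different tangent direction, so the proof proposed here controls $H$ on a full neighborhood of the corner.
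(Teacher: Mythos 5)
Your proposal is correct, and it follows essentially the same route as the paper's own proof: fix the leg of the wedge along a coordinate direction, recognize $K(\gamma(t),\gamma(s))$ as (the real part of) a Cauchy kernel $-\nu/\bigl(2\pi(\gamma(t)-\gamma(s))\bigr)$ in $t$, expand the Cauchy kernel as a geometric series in $t$ using the separation $|\gamma(s)-\bx|\ge 2r$, exchange sum and integral, and bound the resulting coefficients by Cauchy--Schwarz. The paper does the same computation but works with the explicit real rational form $y(s)/\bigl((t-x(s))^2+y(s)^2\bigr)$ and its partial-fraction decomposition rather than the complex-variable shorthand; your version is just a cleaner bookkeeping of the identical argument, and you even track the $1/(2\pi)$ from the kernel normalization that the paper silently drops, which is why you get the stated estimate with a spare factor of $4\pi$.
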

\begin{proof}
Without loss of generality we can assume that $\Gamma$ is shifted, oriented and parameterized so that $\bx = \gamma(0)= 0$ and the leg of the wedge corresponding to positive $t$ is oriented along the positive $x$ axis. Then
\begin{align}
H(t) = \int_{\Gamma\backslash B_{2r}(\bx)} K(\gamma(t),\gamma(s)) f(s)\,{\rm d}s = \int_{\Gamma\backslash B_{2r}(\bx)} \frac{y(s)}{(t-x(s))^2+y(s)^2}f(s)\,{\rm d}s.
\end{align}
Since $\|\gamma(s)-\gamma(t)\| >2r$ it follows that
\begin{align}
H(t) =\int _{\Gamma\backslash B_{2r}(\bx)}\sum {t^n i}\left(\frac{1}{(x(s)+i y(s))^{n+1}}-\frac{1}{(x(s)-i y(s))^{n+1}} \right) f(s)\,{\rm d}s.
\end{align}
In particular, $H$ has a Taylor series about $t=0,$
\begin{align}
\sum_{n=0}^\infty c_n \left(\frac{t}{2r}\right)^n
\end{align}
where
\begin{align}
|c_n| \le  \frac{\sqrt{L}}{r} \|f\|_{L_2(\Gamma \backslash B_{2r}(\bx))}.
\end{align}
\end{proof}

\section{Strong approximation of density away from corner panels}
\label{sec:appb}
In this section, we demonstrate that for a panel which is sufficiently far from the corner and  discretized using $M$ Gauss-Legendre nodes, the density computed using the adjoint of a Dirichlet discretization can be interpolated 
accurately at any point on the panel. As before let $\Omega$ denote a polygonal domain with boundary $\Gamma$. Let
$L$ denote the length of the boundary, and let $\gamma: [0,L]\to \mathbb{R}^2$ denote an arc-length parameterization of the boundary.
Assume that the discretization satisfies the following assumptions:
\begin{enumerate}
\item All panels which are not at a corner, are separated from the closest corner by at least their panel length.
\item Let $E_{\rho}(\Gamma_{i})$ denote the Bernstein $\rho-$ellipse (see \cite{tref_approx}) corresponding to the panel $\Gamma_{i}$, and let $\rho_{i}$ 
be such that $E_{\rho_{i}}(\Gamma_{i})$ does not intersect $\Gamma \setminus S_{\Gamma_{i}}$, where 
$S_{\Gamma_{i}}$ is the edge containing $\Gamma_{i}$ (see~\cref{fig:illus-cons}). Let $\rho_{0} = \min_{i} \rho_{i} >1$.
\item All panels which are not adjacent to a panel at the corner, are separated from the corner by $2r_{c}$ where $r_{c}$ 
denotes the length of the panel at the corner.
\end{enumerate}
Under these assumptions, it can be shown that the accuracy of computing the
Legendre coefficients of the density (at panels which are not adjacent to a corner panel) for the Neumann problem using the adjoint discretization is related to the accuracy 
in the computation of the solution to an associated Dirichlet problem.
\begin{figure}
\begin{center}
\includegraphics[width=0.3\linewidth]{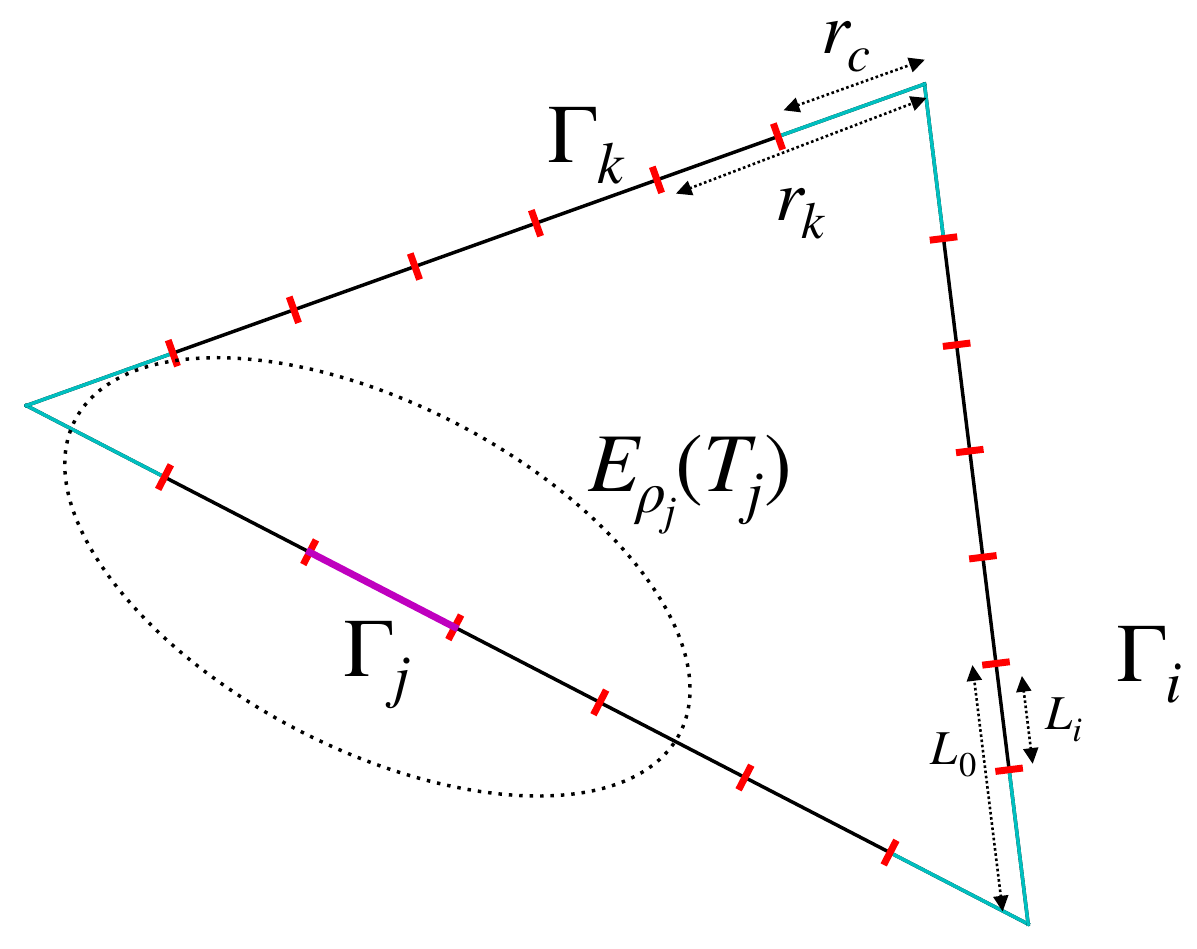}
\caption{Illustrative figure for demonstrating constraints required on the geometry discretization. The panel $\Gamma_{i}$ satisfies restriction 1 if $L_{i}/L_{0}>1$. Panel $\Gamma_{j}$ illustrates the largest Bernstein ellipse which intersects the other edges, and Panel $\Gamma_{k}$ illustrates restriction 3 if $r_{k}>2r_{c}$. }
\label{fig:illus-cons}
\end{center}
\end{figure}

Let $A$ denote the operator corresponding to interior Dirichlet problem using a double layer potential. Let $g$ denote the
right hand side for the Neumann problem, let $\sigma$ denote the corresponding solution. Let $f$ be a Legendre polynomial of
degree $n$ scaled to the panel $\gamma([s_{1},s_{2}])$ and $0$ everywhere else. Then
\begin{equation}
\left<\sigma, f \right> = \left< (A^{T})^{-1} g, f\right> = \left< g, A^{-1} f \right> = \langle g, \sigma_{f}\rangle \, ,
\end{equation}
where $\sigma_{f}$ is the solution of the interior Dirichlet problem with boundary data $f$ using a double layer potential.

Using~\cref{lem_adjm}, the above statement implies that the error in computing the $M$ Legendre coefficients of
the density for the Neumann problem is the same as the error in computing the solution of a Dirichlet problem with 
data given by a Legendre polynomial on the same panel. 

Let $V$ denote the collection of corner points in parameter space $[0,L]$, i.e. $a \in V$ if $\gamma(a)$ is a corner vertex.
Recall that $\sigma_{f}$ denotes the solution of the Dirichlet problem with boundary data $f$, i.e., 
$\sigma_{f}$ satisfies
\begin{equation}
-\frac{\sigma_{f}(s)}{2}   + \int_{0}^{L} k(s,t) \sigma_{f}(t) dt  = f(s) \,  \quad s\in[0,L]\setminus{V}.
\end{equation}
 Then $\sigma_{f} = -2f + \tilde{\sigma}$ where $\tilde{\sigma}$ satisfies the integral equation
 \begin{equation}
-\frac{\tilde{\sigma}}{2}+ \int_{0}^{L} k(s,t) \tilde{\sigma}(t) dt = -2\int_{s_{1}}^{s_{2}} k(s,t) f(t) dt \, , \quad s\in[0,L] \setminus V \, ,
 \end{equation}
i.e, $\tilde{\sigma}$ is the solution of the Dirichlet with data $\tilde{f}$ given by
\begin{equation}
\tilde{f}(s) = -2\int_{s_{1}}^{s_{2}} k(s,t) f(t) dt \, , s \in [0,L] \setminus V \, .
\end{equation}
There are two concerns which must be addressed. First, the accuracy of computing $\tilde{f}(s)$ 
for any point $s \in [0,L]$ using an $M$ point Gauss-Legendre quadrature on $[s_{1},s_{2}]$, and secondly, 
the resolution of the function $\tilde{f}(s)$ on the given discretization of the boundary. 

For any $s$ which is contained on the same segment as $\gamma([s_{1},s_{2}])$, the kernel $k(s,t)$ is identically $0$.
Thus the boundary data $\tilde{f}(s)=0$  on the same edge as the panel $\gamma([s_{1},s_{2}])$. 
We further observe that $f(t)$ is an entire function when extended to the complex plane, since it is a Legendre polynomial.
Moreover, the nearest singularity of the function $k(s,t)$ in the complex plane as a function of $t$ is at $\gamma(s)$.
From assumption 2, it follows that
the error in computing $\tilde{f}(s)$ using an $M$ point Gauss-Legendre rule is bounded by $C \rho_{0}^{-M}$, where
the constant $C$ is related to the smoothness of $k(s,t)$ as a function of $t \in \mathbb{C}.$
Thus the function $\tilde{f}(s)$ can be computed to any desired precision by increasing the order of quadrature nodes
used to compute the integrals.

With regards to the resolution of the of the function $\tilde{f}(s)$ on the given discretization of the boundary, 
we note that the closest singularity of the function $\tilde{f}(s)$ when restricted to a panel away from the corner
and not on the same edge as $\gamma([s_{1},s_{2}])$ is the closest point on the panel $\gamma([s_{1},s_{2}])$. 
However, by assumption 2, the error in resolving the function $\tilde{f}(s)$ using an $M$ point Gauss-Legendre
or Chebyshev panel is bounded by $C \rho_{0}^{-M}$. Note that the behavior of $\tilde{f}(s)$ in the complex plane is related
to the behavior of $k(s,t)$ in the complex $s$ plane and hence the constant $C$ is $O(1)$.
For the panels, at the corner, based on the proof in Appendix A, the error in resolving the function $\tilde{f}(s)$ when
truncated to a Taylor series of order $N$ is less than $C 2^{-N}$, since all points on $\gamma([s_{1},s_{2}])$ are 
well-separated from corners by twice the panel length $r_{c}$. Thus, by making the panels small enough, $\rho$ can be increased arbitrarily to obtain desired tolerances on the boundary data $\tilde{f}(s)$ on the corresponding discretization of the boundary. 
 
 Thus, the boundary data $\tilde{f}(s)$ is piecewise analytic , which can be approximated to any desired tolerance by appropriately reducing the panel sizes. This is the precise setup for which the discretization of the Dirichlet problem is designed to obtain accurate solutions to the density $\tilde{\sigma}$. 
 

\end{document}